\documentclass[reqno,a4paper, 11pt]{amsart}

\usepackage[pdfpagelabels,hidelinks,colorlinks]{hyperref}
\usepackage{graphicx}
\usepackage{latexsym}
\usepackage{mathabx}
\usepackage{amsmath}
\usepackage{amssymb}
\usepackage[T1]{fontenc}
\usepackage[utf8]{inputenc}
\usepackage{lmodern}
\usepackage{verbatim}
\usepackage{microtype}

\let\textringaccent\r

         \def\r{\rho}         \def\z{\zeta}
         
\def\a{\alpha}

\newcommand{\hol}{{\mathcal H}}
\DeclareMathOperator{\og}{O}

\def\D{{\mathbb D}}

\def\C{{\mathbb C}}  \def\N{{\mathbb N}}
 
\def\H{{\mathcal H}}

\def\tp{{\tilde{p}}}

\def\({\left(}       \def\){\right)}

\newcommand{\h}{\mathcal{H}}
\newcommand{\hti}{\widetilde{\mathcal{H}}}
\newcommand{\hg}{\mathcal{H}_{g}}

\newtheorem{theorem}{Theorem}
\newtheorem{lemma}{Lemma}
\newtheorem{corollary}{Corollary}

\theoremstyle{definition}

\theoremstyle{remark}
\newtheorem{remark}{Remark}
\numberwithin{equation}{section}
\theoremstyle{plain}
\newtheorem{other}{\bf Theorem}              

\newenvironment{Prf}{\noindent{\emph{Proof of}}}{$\Box$ }
 \DeclareMathOperator{\ogr}{O}

\begin{document}
\title[Generalized Hilbert Operators]
{ The Boundedness Problem for Generalized Hilbert Operators on Hardy Spaces}
\author[D. Norrbo]{David Norrbo}
\address{Department of Natural and Health Sciences, \textringaccent{A}bo Akademi University, FI-20500 \textringaccent{A}bo, Finland}
\email{david.norrbo@abo.fi}

\author[J. \'A. Pel\'aez]{Jos\'e \'Angel Pel\'aez}
\address{Departamento de An\'alisis Matem\'atico, Universidad de M\'alaga, Campus de Teatinos, 29071 M\'alaga, Spain}
\email{japelaez@uma.es}
\thanks{The research of  the first author is supported by the Magnus Ehrnrooth Foundation. The research of the second author is supported in part by Ministerio de Ciencia e Innovaci\'on, Spain, project PID2022-136619NB-I00; La Junta de Andaluc{\'i}a, project FQM210.}

\author[F. Wu]{Fanglei Wu}
\address{Department of Mathematics, Shantou University, Shantou, Guangdong 515063, China}
\address{Department of Physics and Mathematics, University of Eastern Finland, P.O. Box 111, FI-80101 Joensuu, Finland}
\email{fangleiwu1992@gmail.com}
\email{fanglei.wu@uef.fi}

\subjclass[2020]{Primary 47B35, 30H10; Secondary 30H20, 30H25, 47B38}
\keywords{Generalized Hilbert operator, Hardy space, mean Lipschitz space, mixed-norm space, lacunary series}

\begin{abstract}
Let $g$ be analytic in the unit disc and consider the generalized Hilbert operator
$$
\hg(f)(z)=\int_0^1 f(t)g'(tz)\, dt.
$$
The boundedness of $\mathcal H_g$ on $H^p$ is characterized by the mean Lipschitz condition $g\in\Lambda\left(p,\frac{1}{p}\right)$ when $1<p\leq2$, while the problem remains open for $2<p<\infty$. It has been recently proved that the condition $g\in\Lambda\left(p,\frac{1}{p}\right)$ does not imply the boundedness of $\hg$ on $H^p$, $2<p<\infty$  \cite{GuoTang2026}.
We show that this condition is far from sufficient in the latter range: for every $2<p<\infty$, there exists a  function $g\in\Lambda\left(p,\frac{1}{p}\right)$ such that $\mathcal H_g$ is not bounded even from $H^p$ into $H^1$.
 The main ingredient is an exact characterization of the boundedness of $\mathcal H_g:H^p\to H^2$ for all $1\leq p\leq\infty$. In particular, when $2<p<\infty$, this mapping is bounded if and only if $g'$ belongs to a certain mixed-norm space. For lacunary symbols, the same mixed-norm condition also characterizes the boundedness of $\mathcal H_g$ on $H^p$, and hence gives a complete solution of the open problem within this class of symbols. We also show that, for $1\leq q\leq\infty$, boundedness of $\mathcal H_g:H^1\to H^q$ is characterized by the condition $g'\in H^q$. We also characterize compactness of $\hg$ in the aforementioned cases.

\end{abstract}

\maketitle

\section{Introduction and main results}\label{intro}

Let $\D$ denote the unit disc in the complex plane $\C$, and let
$\H(\D)$ be the space of analytic functions on $\D$. The classical
Hilbert matrix
$$
\left(\frac{1}{n+k+1}\right)_{n,k\geq 0}
$$
induces the Hilbert operator
$$
\mathcal H(f)(z)
=\sum_{n=0}^{\infty}
\left(\sum_{k=0}^{\infty}
\frac{\widehat f(k)}{n+k+1}\right)z^n,
\qquad z\in\D.
$$
For $f\in H^1$, this operator admits the integral representation
$$
\mathcal H(f)(z)=\int_0^1\frac{f(t)}{1-tz}\,dt,
\qquad z\in\D.
$$
Motivated by this representation, Galanopoulos, Girela, Pel\'aez and
Siskakis \cite{GaGiPeSis} undertook a systematic study of the family of
generalized Hilbert operators on Hardy and standard Bergman spaces
\begin{equation}\label{H-g}
\hg(f)(z)=\int_0^1 f(t)g'(tz)\,dt,
\qquad z\in\D,
\end{equation}
where $g\in\H(\D)$. By the Fej\'er--Riesz inequality
\cite[p.~46]{D}, the integral in
\eqref{H-g} converges absolutely, and
therefore defines a function in $\H(\D)$. For
$g(z)=\log\frac{1}{1-z}$, the operator $\hg$ reduces to the classical
Hilbert operator $\mathcal H$. The generalized
Hilbert operators between classical spaces of analytic functions have
since been extensively studied
\cite{Blasco2022,GaGiPeSis,GuoTang2026,PelRathg,PelSeco}.

In order to state our main results we need to introduce some notation. For $0<p<\infty$ and $0<r<1$, set
$$
M_p(r,f)
=
\left(
\frac{1}{2\pi}\int_0^{2\pi}
|f(re^{i\theta})|^p\,d\theta
\right)^{1/p},
$$
and let
$$
M_\infty(r,f)=\max_{|z|=r}|f(z)|.
$$
For $0<p\leq\infty$, the Hardy space $H^p$ consists of those
$f\in\H(\D)$ for which
$$
\|f\|_{H^p}
=
\sup_{0<r<1}M_p(r,f)
<\infty.
$$

The extension $v(z)=v(|z|)$ of a non-negative function $v\in L^1([0,1))$, is called a radial weight. For $0<p\leq \infty$ and $0<q<\infty$, the
analytic
 weighted mixed-norm 
 spaces $A^{p,q}_v$  consists of measurable functions
$f\in \H(\D)$
 such that
	$$
	\|f\|_{A^{p,q}_v}^q =\int_0^1 M_p^q(r,f) v(r)\,dr<\infty
	$$
If $v(z)=(\alpha+1)(1-|z|^2)^{\alpha}$, $\a>-1$ then we write   $A^{p,q}_v=A^{p,q}_\a$.

We also consider  the mean Lipschitz spaces
$\Lambda\left(p,\alpha\right)$. For $1\le p<\infty$ and $0<\a\le 1$ the
space $\Lambda\left(p,\a\right)$  consists of those $g\in \hol(\D)$ having
a non-tangential limit $g(e^{i\theta})$ almost everywhere and such
that
$$
\omega_p(g, t)=\ogr(t^{\alpha}), \quad t\to 0,
$$
where
$$
\omega_p(g, t)=\sup_{0<h\le t}\left(\int_0^{2\pi}
|g(e^{i(\theta+h)})-g(e^{i\theta})|^p \frac{d\theta}{2\pi}\right)^{1/p}
$$
is the integral modulus of continuity of order $p$.
A classical result of Hardy and Littlewood
\cite{HL-32} (see also Chapter~5 of \cite{D}) asserts that
\begin{equation}\label{hl32}
\Lambda\left(p,\a\right)=\left \{ f\in H^p : M_p(r,f^\prime)
=\og \left ((1-r)^{\alpha -1}\right )\right \},
\end{equation}
for $1\leq p<\infty
,\ 0<\a \leq 1$. The corresponding "little oh" spaces are denoted by $\lambda(p,\alpha)$.

For  $1<p<\infty$ and $0<\a \leq 1$ we write $$\|g\|_{ \Lambda(p,\a)}=|g(0)|+\sup_{t>0}\frac{\omega_p(g, t)}{t^\alpha}.$$

The boundedness of $\hg$ on $H^p$ is well understood when
$1<p\leq 2$, whereas the case $2<p<\infty$ remains open. More
precisely, the condition
$g\in\Lambda\left(p,\frac{1}{p}\right)$
is necessary for the boundedness of $\hg$ on $H^p$ whenever
$1<p<\infty$, and it is also sufficient when $1<p\leq 2$
\cite[Theorem 1]{GaGiPeSis}. For $2<p<\infty$, however, this condition is no
longer sufficient
\cite[Theorem~3.5 and Corollary~3.6]{GuoTang2026}, and no complete
characterization is currently known.

Motivated by this open problem, we study several related 
properties of $\hg$ that shed some light on the case $p>2$. Our
starting point is the boundedness and compactnes of $\hg:H^p\to H^2$, which we
characterize for all $1\leq p\leq\infty$. In particular, in the range
$2<p<\infty$, the resulting condition is expressed in terms of an
analytic mixed-norm space. 

\begin{theorem}\label{thm:H2target}
Let $g\in\H(\D)$. Then the following statements hold.

\begin{itemize}

\item[(i)] The following statements are equivalent:
\begin{itemize}
\item[(a)] $\H_g:H^1\to H^2$ is bounded;
\item[(b)] $\H_g:H^1\to H^2$ is compact;
\item[(c)] $g'\in H^2$.
\end{itemize}
Moreover,
$$
2\|g'\|_{H^2}
\leq
\|\H_g\|_{H^1\to H^2}
\leq
\pi\|g'\|_{H^2}.
$$

\item[(ii)] If $1<p\leq2$, then $\H_g:H^p\to H^2$
is bounded if and only if
$g\in\Lambda\left(2,\frac1p\right)$.
Moreover,
$$
\|\H_g\|_{H^p\to H^2}
\asymp
\|g-g(0)\|_{g\in\Lambda\left(2,\frac1p\right)}.
$$
Furthermore, $
\H_g:H^p\to H^2$
is compact if and only if
$
g\in\lambda\left(2,\frac1p\right).
$
\item[(iii)] If $2<p<\infty$, then the following statements are equivalent:
\begin{itemize}
\item[(a)] $\H_g:H^p\to H^2$ is bounded;
\item[(b)] $\H_g:H^p\to H^2$ is compact;
\item[(c)] $g'\in A_{\frac{\tilde p}2}^{2,\tilde p}$, where
$\frac1{\tilde p}=
\frac12-\frac1p
$.
\end{itemize}
Moreover,
$$
\|\H_g\|_{H^p\to H^2}
\asymp
\|g'\|_{A_{\frac{\tilde p}{2}}^{2,\tilde p}}.
$$

\item[(iv)] The following statements are equivalent:
\begin{itemize}
\item[(a)] $\H_g:H^\infty\to H^2$ is bounded;
\item[(b)] $\H_g:H^\infty\to H^2$ is compact;
\item[(c)] $g\in H^2$.
\end{itemize}
Moreover,
$$
\|\H_g\|_{H^\infty\to H^2}
=
\|g-g(0)\|_{H^2}.
$$

\end{itemize}
\end{theorem}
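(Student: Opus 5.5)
The whole argument runs through the adjoint. Write $g'(z)=\sum_n (n+1)\widehat g(n+1)z^n$. Then
$$\hg(f)(z)=\sum_n (n+1)\widehat g(n+1)\Big(\int_0^1 t^n f(t)\,dt\Big) z^n .$$
Pairing with $h\in H^2$ in the $H^2$ inner product gives
$$\langle \hg f,h\rangle=\int_0^1 f(t)\,\overline{G_h(t)}\,dt,\qquad G_h(t)=\sum_n (n+1)\overline{\widehat g(n+1)}\,\widehat h(n)\,t^n .$$
So $G_h$ is, up to conjugation of coefficients, the Hadamard product $g'\ast h$, and $\|\hg\|_{H^p\to H^2}=\sup_{\|h\|_{H^2}\le1}\|G_h\|_{(H^p)^*\text{ restricted to }[0,1)}$. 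The point evaluation functional $f\mapsto\int_0^1 f(t)\overline{G(t)}\,dt$ is handled by Carleson-measure and Hardy-type inequalities on $H^p$. Concretely, $f\mapsto f|_{[0,1)}$ maps $H^p$ into $L^p([0,1),(1-t)^{p-2}\,dt)$ for $p\geq 1$, and also into the Lorentz-type space coming from Fejér–Riesz for $p=1$.

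\emph{Parts (i) and (iv).} For (iv), $f\equiv1$ gives $\hg(1)=g-g(0)$, which yields the lower bound. For the upper bound, $\int_0^1 |G_h(t)|\,dt\le \|g-g(0)\|_{H^2}\|h\|_{H^2}$, since $\int_0^1 t^n\,dt=1/(n+1)$ cancels the factor $n+1$. Nonnegativity of the coefficients is not needed, because $\int_0^1 |G|\le\sum_n|\widehat g(n+1)||\widehat h(n)|$ and Cauchy–Schwarz applies. Compactness in (iv) follows by approximating $g$ by its Taylor polynomials in $H^2$, which gives finite-rank approximants. For (i), use test functions $f_a(t)=(1-a^2)/(1-at)^2$ with $\|f_a\|_{H^1}=1$. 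Then $\hg f_a$ tends, in the appropriate sense, to $g'(\cdot)$ as $a\to1$, since $f_a\,dt$ is an approximate identity at $t=1$ of mass about $1/2$. This yields $2\|g'\|_{H^2}\lesssim$ norm. For the upper bound, use Minkowski's inequality: $\|\hg f\|_{H^2}\le\int_0^1|f(t)|\,M_2(t,g')\,dt\le \|g'\|_{H^2}\int_0^1|f|\,dt\le\pi\|g'\|_{H^2}\|f\|_{H^1}$ by Fejér–Riesz. The same estimate combined with Taylor truncation of $g'$ in $H^2$ gives compactness.

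\emph{Part (ii).} Sufficiency comes from Minkowski's inequality and Hardy's inequality: $\|\hg f\|_{H^2}\le\int_0^1|f(t)|M_2(t,g')\,dt\lesssim\int_0^1|f(t)|(1-t)^{1/p-1}\,dt$. This is bounded by $\|f\|_{H^p}$ via the known estimate $\int_0^1|f(t)|(1-t)^{1/p-1}\,dt\lesssim\|f\|_{H^p}$ for $1< p\le 2$ (Hardy–Littlewood). Necessity follows by testing on $f_a=(1-a)^{1-1/p}(1-at)^{-1}$ and bounding $M_2(a,g')$ from below by $\|\hg f_a\|_{H^2}$. The little-oh version gives compactness.

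\emph{Part (iii).} This is the main obstacle. The norm $A^{2,\tilde p}_{\tilde p/2}$ equals $\big(\int_0^1 M_2(r,g')^{\tilde p}(1-r)^{\tilde p/2}\,dr\big)^{1/\tilde p}$. Minkowski's inequality and Hölder's inequality with $1/p+1/\tilde p=1/2$ give sufficiency from an estimate of the form $\int_0^1|f(t)|^{p}(1-t)^{p/2-1}\,dt\lesssim\|f\|^{p}_{H^p}$ for $p\ge2$. Indeed,
$$\int_0^1|f|M_2(t,g')\,dt\le\Big(\int|f|^2(1-t)^{-1/2-\cdots}\Big)\cdots,$$
and one arranges the exponents $|f|(1-t)^{1/2-1/p}\in L^p(dt/(1-t))$ against $M_2(t,g')(1-t)^{1/2}\in L^{\tilde p}$ under the pairing with $dt/(1-t)$ versus $dt$. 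I would check the exponents: this needs $\int|f(t)|^p(1-t)^{p/2-2}\,dt\lesssim\|f\|_{H^p}^p$, which fails for $p>2$. So I would instead dyadically block $g'$ (Hadamard blocks $\Delta_n g'$) and use Littlewood–Paley. Both $\|\hg f\|_{H^2}^2\asymp\sum_n\|\Delta_n\hg f\|^2$ and $\Delta_n\hg f\approx 2^{-n}\Delta_n g'\cdot\big(\text{average of }f\text{ on }[1-2^{-n},1-2^{-n-1}]\big)$. The problem then reduces to the discrete statement that $(a_n)\mapsto\sum|a_n|^2\|\Delta_ng'\|^2 2^{-2n}$ is bounded on the set of sequences $a_n=$ local averages of $f\in H^p$. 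The sharp fact is that these sequences fill $2^{n/p}a_n\in\ell^p$; this is necessary via lacunary test functions and sufficient by the Hardy–Littlewood maximal inequality on radii. Duality of $\ell^{p/2}$ with $\ell^{\tilde p/2}$ then gives exactly $\sum_n(2^{-n/2}\|\Delta_n g'\|_{2}\,2^{-n/p}\cdots)^{\tilde p}<\infty$, which is equivalent to $g'\in A^{2,\tilde p}_{\tilde p/2}$. Necessity uses $f=\sum_n b_n z^{2^n}$ with a randomized or lacunary choice so that $\|f\|_{H^p}\asymp\|b\|_{\ell^2}$ fails; instead one needs $f$ that is large on $[0,1)$. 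So I would use $f=\sum_n c_n 2^{n/p}\,\chi$-type kernels $(1-r_n t)^{-1}$-pieces with controlled $H^p$ norm, estimated through $\|\sum\|_{H^p}\lesssim\|(c_n)\|_{\ell^p}$ by a Khinchin-type argument. Compactness comes for free because the condition has $\tilde p<\infty$, so the tails of the blocks are small.
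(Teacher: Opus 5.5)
The gap is in the sufficiency half of (ii), and therefore also in the compactness half, which you derive from the same norm bound. The estimate $\int_0^1|f(t)|(1-t)^{1/p-1}\,dt\lesssim\|f\|_{H^p}$ is the Hardy--Littlewood inequality with $q=\infty$, $\lambda=1$, and that inequality requires $p\le 1$. It fails for every $p>1$. Take $f(z)=(1-z)^{-1/p}(\log\frac{e}{1-z})^{-\beta}$ with $\frac1p<\beta\le 1$. Then $f\in H^p$ because $\beta p>1$, but on $[0,1)$ your integrand equals $(1-t)^{-1}(\log\frac{e}{1-t})^{-\beta}$, which is not integrable.

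No other radial estimate can rescue this, because the loss already occurs at the Minkowski step. For $g(z)=\sum_{n\ge0}2^{-n/p}z^{2^n}\in\Lambda(2,\frac1p)$ one has $M_2(t,g')\gtrsim(1-t)^{1/p-1}$. Hence $\int_0^1|f(t)|M_2(t,g')\,dt=\infty$ for this $f$, even though $\hg f\in H^2$. This is the same obstruction you noticed yourself in (iii), and the cure is the same: use orthogonality in $H^2$.
Since $k\mapsto\int_0^1t^k|f(t)|\,dt$ decreases, \eqref{Hgcoef} gives, up to the harmless block $n=0$,
$\|\hg f\|_{H^2}^2\le\sum_{n\ge1}\|\Delta_n g'\|_{H^2}^2\,b_n^2$, where $b_n=\int_0^1t^{2^n}|f(t)|\,dt$. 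Then:
the block characterization of $\Lambda(2,\frac1p)$ gives $\|\Delta_ng'\|_{H^2}\lesssim2^{n/p'}\|g-g(0)\|_{\Lambda(2,1/p)}$;
the decisive input is $\sum_n2^{n(p-1)}b_n^p\lesssim\|f\|_{H^p}^p$, which the paper obtains from the boundedness of $\hti$ on $H^p$ together with Pavlovi\'c's formula \eqref{eq:htinorm};
since $p\le2$, the inequality $\sum_n(2^{n/p'}b_n)^2\le(\sum_n(2^{n/p'}b_n)^p)^{2/p}$ closes the argument. The paper instead interpolates with the pointwise bound \eqref{eq:moments}.

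Two further points about (iii). Once you pass to dyadic blocks, your sufficiency argument is the paper's: the same block bound by $b_n$, followed by H\"older in $\ell^{p/2}$ against $\ell^{\tilde p/2}$. However, $\Delta_n\hg f$ is not approximately $2^{-n}\Delta_ng'$ times the average of $f$ over $[1-2^{-n},1-2^{-n-1}]$. The moments see all of $[1-2^{-n},1)$ and can cancel; only the one-sided bound by $b_n$ is true, and only it is needed.

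Your necessity argument is not actually carried out. Lacunary or randomized test functions are beside the point, and Khinchin is the wrong tool: random signs destroy the positivity on $[0,1)$ that makes the moments add up. A workable version is $f=\sum_{n\le N}c_nf_{2^n}$ with $f_{2^n}(z)=2^{-n/p'}(1-(1-2^{-n})z)^{-1}$ and $c_n\ge0$.
Positivity gives $\int_0^1t^kf(t)\,dt\gtrsim c_n2^{-n/p'}$ for $k\in I(n)$, so $\|\hg f\|_{H^2}^2\gtrsim\sum_nc_n^2\,2^{-2n/p'}\|\Delta_ng'\|_{H^2}^2$.
The bound $\|f\|_{H^p}\lesssim\|c\|_{\ell^p}$ follows from $|1-(1-2^{-n})e^{i\theta}|^{-1}\lesssim\min(2^n,|\theta|^{-1})$ and a geometric-convolution (discrete Hardy) estimate.
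Choosing $c$ extremal for the $\ell^{p/2}$--$\ell^{\tilde p/2}$ duality, and using $\tilde p/p'=\tilde p/2+1$, yields $\sum_n2^{-n(\tilde p/2+1)}\|\Delta_ng'\|_{H^2}^{\tilde p}\lesssim\|\hg\|_{H^p\to H^2}^{\tilde p}$.
The paper runs the continuous version of exactly this construction. It uses $Q_\rho(z)=\int_0^1\phi_\rho(x)(1-xz)^{-1}\,dx$ with $\phi_\rho=(M_2(r,g_\rho')(1-r)^{1/2})^{\tilde p/p}$, Lemma~\ref{le:technical} for $\|Q_\rho\|_{H^p}\asymp\|\phi_\rho\|_{L^p}$, and dilations $g_\rho$ so as not to presuppose finiteness.

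Finally, two slips. First, $\int_0^1f_a(t)\,dt=1+a\to2$, not $\frac12$, which is why $\hg f_a\to2g'$. Second, $\hg(1)=(g(z)-g(0))/z$, which has the same $H^2$ norm as $g-g(0)$.
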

The above characterization enables us to
construct, for every $2<p<\infty$, a symbol
$g\in\Lambda\left(p,\frac{1}{p}\right)$ such that $\hg:H^p\to H^1$ is not bounded. Hence it improves a recent result in \cite[Theorem 3.5 and Corollary 3.6]{GuoTang2026}. 

\begin{corollary}\label{th:counterexample}
Let $2<p<\infty$. Then there is $g\in\Lambda\left(p,\frac{1}{p}\right)$ such that $\hg: H^p\to H^1$ is not bounded. In particular, $\hg$ is not bounded on $H^p$.
\end{corollary}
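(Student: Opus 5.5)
We take $g$ to be a Hadamard lacunary series. For such symbols the $H^2$ and $H^1$ norms of the image are comparable, so Theorem~\ref{thm:H2target}(iii) decides boundedness into $H^1$ as well. The candidate is
$$
g(z)=\sum_{k=0}^\infty 2^{-k\left(1-\frac1p\right)} z^{2^k},\qquad z\in\D .
$$

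\emph{Step 1: reduction from $H^1$ to $H^2$.} Expanding $g'(tz)$ in \eqref{H-g} gives
$$
\hg(f)(z)=\sum_{n\geq 0}(n+1)\widehat g(n+1)\Big(\int_0^1 t^n f(t)\,dt\Big) z^n .
$$
Hence $\hg(f)$ is supported on the frequencies $\{2^k-1\}$. These form a Hadamard gap sequence for $k\ge 2$, and a finite number of initial terms is harmless. By the classical theorem of Paley and Zygmund on lacunary series, $\|h\|_{H^1}\asymp\|h\|_{H^2}$ for every such $h$. Therefore $\hg:H^p\to H^1$ is bounded if and only if $\hg:H^p\to H^2$ is bounded. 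By Theorem~\ref{thm:H2target}(iii), the latter holds if and only if $g'\in A^{2,\tp}_{\tp/2}$ with $\frac1{\tp}=\frac12-\frac1p$. So it suffices to show that $g\in\Lambda(p,\frac1p)$ but $g'\notin A^{2,\tp}_{\tp/2}$.

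\emph{Step 2: $g\in\Lambda(p,\frac1p)$.} The derivative $g'$ is again lacunary, so Zygmund's theorem gives $M_p(r,g')\asymp M_2(r,g')$ for $2<p<\infty$. Then
$$
M_2^2(r,g')\asymp\sum_k 2^{2k}\,2^{-2k(1-\frac1p)} r^{2^{k+1}}=\sum_k 2^{2k/p} r^{2^{k+1}}\asymp (1-r)^{-2/p},
$$
by the standard dyadic estimate: the terms grow geometrically until $2^k\approx(1-r)^{-1}$ and decay superexponentially afterwards. Thus $M_p(r,g')=\og((1-r)^{\frac1p-1})$ fails? No: $(1-r)^{-1/p}$ is much smaller than $(1-r)^{\frac1p-1}$, so we have over-decayed. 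The exponent must be recomputed so that $M_2(r,g')\asymp(1-r)^{\frac1p-1}$ exactly. This requires $2^k|a_k|\asymp 2^{k(1-\frac1p)}$, that is, $a_k=2^{-k/p}$. So we instead take
$$
g(z)=\sum_{k} 2^{-k/p}z^{2^k}.
$$
Then $M_p(r,g')\asymp M_2(r,g')\asymp (1-r)^{\frac1p-1}$. Also $g\in H^p$, since its coefficients are square summable and $g$ is lacunary. By \eqref{hl32}, $g\in\Lambda(p,\frac1p)$.

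\emph{Step 3: $g'\notin A^{2,\tp}_{\tp/2}$.} Using the two-sided estimate $M_2(r,g')\asymp(1-r)^{\frac1p-1}$, the integrand of the mixed norm behaves like
$$
(1-r)^{\tp(\frac1p-1)+\frac{\tp}2}=(1-r)^{\tp(\frac1p-\frac12)}=(1-r)^{-1}.
$$
This is not integrable on $[0,1)$, so $g'\notin A^{2,\tp}_{\tp/2}$. By Step 1, $\hg:H^p\to H^1$ is unbounded. Since $H^p\subset H^1$ continuously, $\hg$ is not bounded on $H^p$ either.

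The main care point is the lower bound $M_2^2(r,g')\gtrsim(1-r)^{2/p-2}$, which is needed for the divergence in Step 3. It follows by keeping only the single term with $2^k\approx(1-r)^{-1}$, since then $r^{2^{k+1}}$ is bounded below. The other point to check is that Paley's $H^1$ lower bound applies to the gap sequence $2^k-1$, which is Hadamard with ratio tending to $2$.
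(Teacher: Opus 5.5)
Your argument is correct and is essentially the paper's proof. You use the same symbol $g(z)=\sum_k 2^{-k/p}z^{2^k}$, the same passage from $H^1$ to $H^2$ via Zygmund's inequality for lacunary series, and Theorem~\ref{thm:H2target}(iii). The only difference is that you check $g\in\Lambda(p,\frac1p)$ and $g'\notin A^{2,\tp}_{\tp/2}$ directly from $M_2(r,g')\asymp(1-r)^{\frac1p-1}$, whereas the paper uses the dyadic-block criteria (Theorem~\ref{th:mlip} and \eqref{eq:symbolnorm}).

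In a final write-up, drop the false start. Your first candidate does lie in $\Lambda(p,\frac1p)$; it is unusable only because its derivative belongs to $A^{2,\tp}_{\tp/2}$, so by Theorem~\ref{th:lacunaryp-p} it even induces a bounded operator on $H^p$.
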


Indeed, the counterexample above can be chosen by lacunary series. Recall that a series
$$
g(z)=\sum_{j=0}^{\infty}a_jz^{n_j}
$$
is a lacunary series if $n_{j+1}\ge\lambda n_j$ for some $\lambda>1$ and every $j$. Our next result says that the open problem can be completely answered for the subclass of lacunary series symbols $g$.

\begin{theorem}\label{th:lacunaryp-p} Assume  that $2<p<\infty$ and
$g\in \H(\D)$ is a lacunary series. Then $\hg$ is bounded on $H^p$ if and only if $g'\in A^{2,\tilde{p}}_{\frac{\tilde{p}}{2}}$ where $\frac{1}{\tp}=\frac{1}{2}-\frac{1}{p}$.
Moreover,
\begin{equation}\label{eq:norm1}
\| \hg\|_{H^p\to H^p}\asymp \|g'\|_{A^{2,\tilde{p}}_{\frac{\tilde{p}}{2}}}.
\end{equation}
\end{theorem}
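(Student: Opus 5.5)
Necessity comes from Theorem~\ref{thm:H2target}(iii). Since $p>2$ we have $H^p\subset H^2$ with $\|h\|_{H^2}\le\|h\|_{H^p}$. So if $\hg$ is bounded on $H^p$, then $\hg:H^p\to H^2$ is bounded with $\|\hg\|_{H^p\to H^2}\le\|\hg\|_{H^p\to H^p}$. Part (iii) then gives $g'\in A^{2,\tilde p}_{\frac{\tilde p}{2}}$ together with $\|g'\|_{A^{2,\tilde p}_{\frac{\tilde p}{2}}}\lesssim\|\hg\|_{H^p\to H^p}$. This direction does not use lacunarity.

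For sufficiency, the plan is to exploit the fact that $\hg(f)$ inherits the lacunary structure of $g$. Writing $g(z)=\sum_j a_jz^{n_j}$, we get
\[
\hg(f)(z)=\sum_{j} n_ja_j\Big(\int_0^1 f(t)t^{n_j-1}\,dt\Big) z^{n_j-1},
\]
which is again a lacunary series, with gaps $n_{j+1}-1\ge\lambda'(n_j-1)$ for large $j$; the finitely many initial terms only contribute a finite-rank operator. By Zygmund's theorem on lacunary series, the $H^p$ and $H^2$ norms of a lacunary series are equivalent, with constants depending only on $p$ and $\lambda$. Hence
\[
\|\hg(f)\|_{H^p}\asymp\|\hg(f)\|_{H^2}\le \|\hg\|_{H^p\to H^2}\|f\|_{H^p}\lesssim \|g'\|_{A^{2,\tilde p}_{\frac{\tilde p}{2}}}\|f\|_{H^p},
\]
where the last step again uses Theorem~\ref{thm:H2target}(iii). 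Combining the two directions gives \eqref{eq:norm1}.

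The main obstacle is making sure Zygmund's equivalence is applied to a genuinely Hadamard-lacunary sequence, and that $\hg(f)\in H^2$ lets us conclude $\hg(f)\in H^p$. On the first point, the shift $n_j\mapsto n_j-1$ preserves Hadamard gaps once $n_j\ge 2$, with ratio $\lambda'=(\lambda n_j-1)/(n_j-1)\ge\lambda$ or close to it. Removing the first few terms (at most one term with $n_j=1$, plus a bounded polynomial part) handles the rest. Each such term is trivially bounded, since $|\int_0^1 f(t)t^{n-1}dt|\lesssim\|f\|_{H^1}$ by Fejér--Riesz.

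On the second point, the coefficients of $\hg(f)$ are square summable, so the lacunary series lies in $H^2$. Zygmund's theorem then says it belongs to every $H^q$, $q<\infty$, with $\|\cdot\|_{H^q}\le C(q,\lambda)\|\cdot\|_{H^2}$. This is exactly the inequality we need.
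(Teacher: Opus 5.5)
Your proof is correct and is essentially the paper's argument. Necessity comes from $\|\cdot\|_{H^2}\le\|\cdot\|_{H^p}$ together with Theorem~\ref{th:hardypmayor2}. Sufficiency comes from Zygmund's inequality $\|\hg(f)\|_{H^p}\le A_{\lambda,p}\|\hg(f)\|_{H^2}$ for the lacunary series $\hg(f)$, combined with the same theorem. Your caution about the index shift is unnecessary: since $n_{j+1}-1\ge\lambda n_j-1\ge\lambda(n_j-1)$, the frequencies of $\hg(f)$ are lacunary with exactly the same step $\lambda$, as the paper asserts, so you do not need to split off a finite-rank part.
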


We next turn to the endpoint space $H^1$. For mappings from
$H^1$ into a Hardy space, both the boundedness and compactness of $\hg$ admit a
simple characterization in terms of the derivative of the symbol. For $1\leq p <\infty$, the conditions coincide, while for the codomain $H^\infty$, compactness is characterized by $g'\in \mathcal A$, where $\mathcal A$ is the disc algebra, that is, the closure of analytic polynomials in $H^\infty$, $H^\infty\cap C(\overline{\D})$.
\begin{theorem}\label{OnH1}
   Let $g\in\H(\D)$ and $1\le p<\infty$. Then the following statements are equivalent:
   \begin{itemize}
       \item[(i)] $\H_g: H^1\to H^p$ is bounded;
       \item[(ii)]  $\H_g: H^1\to H^p$ is compact;
       \item[(iii)] $g'\in H^p$.
   \end{itemize}
Moreover, $\H_g: H^1\to H^\infty$ is bounded if and only if $g'\in H^\infty$, and compact if and only if $g'\in \mathcal A$.
 
 Furthermore, for $1\le p\leq\infty$
   \[
 2\|g'\|_{H^p}\le   \|\H_g\|_{H^1\to H^p}\le \pi\|g'\|_{H^p}.
   \]
   
\end{theorem}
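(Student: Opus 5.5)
The starting point is the integral representation $\hg(f)(z)=\int_0^1 f(t)g'(tz)\,dt$, viewed as an $H^p$-valued integral. For the upper bound I will use Minkowski's inequality in $H^p$ (for $p=\infty$, simply the sup norm) together with the fact that dilations do not increase Hardy norms, $\|g'_t\|_{H^p}\le\|g'\|_{H^p}$ where $g'_t(z)=g'(tz)$. This gives
$$
\|\hg f\|_{H^p}\le \int_0^1 |f(t)|\,\|g'_t\|_{H^p}\,dt\le \|g'\|_{H^p}\int_0^1|f(t)|\,dt\le \pi\|g'\|_{H^p}\|f\|_{H^1},
$$
where the last step is the Fej\'er--Riesz inequality $\int_{-1}^1|f(t)|\,dt\le \pi\|f\|_{H^1}$. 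Passing from $(-1,1)$ to $[0,1)$ only helps, so this settles (iii)$\Rightarrow$(i) with the stated constant for every $1\le p\le\infty$.

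For the lower bound I will test against the normalized kernels $f_a(z)=\frac{1-a^2}{(1-az)^2}$ with $a\in(0,1)$. These satisfy $\|f_a\|_{H^1}=1$, since $f_a$ is the square of a normalized $H^2$ reproducing kernel. Writing $\hg f_a(z)=\int_0^1 \frac{1-a^2}{(1-at)^2}g'(tz)\,dt$, the weight $\frac{1-a^2}{(1-at)^2}\,dt$ on $[0,1]$ has total mass $\frac{1-a^2}{a}\cdot\frac{a}{1-a}=1+a\to 2$, and it concentrates at $t=1$ as $a\to1$. For fixed $r<1$, $M_p(r,\hg f_a)$ therefore tends to $2M_p(r,g')$, using uniform continuity of $g'$ on compact subsets. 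Hence $\|\hg\|\ge 2M_p(r,g')$ for every $r<1$, which gives $2\|g'\|_{H^p}\le\|\hg\|$ and (i)$\Rightarrow$(iii).

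For compactness when $1\le p<\infty$, I will factor $\hg$ through $L^1[0,1]$. The map $T:H^1\to L^1([0,1],dt)$, $f\mapsto f|_{[0,1)}$, is bounded by Fej\'er--Riesz, and $S:L^1\to H^p$, $h\mapsto\int_0^1 h(t)g'_t\,dt$, is bounded. To show $S$ is compact, I will use that $t\mapsto g'_t$ is norm continuous from $[0,1]$ into $H^p$, with $g'_1=g'$. This holds because dilations converge in $H^p$ for $p<\infty$. The set $K=\{g'_t: t\in[0,1]\}$ is then compact, and $S$ maps the unit ball of $L^1$ into the closed absolutely convex hull of $K$, which is compact by Mazur's theorem. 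Therefore $\hg=S\circ T$ is compact. The same argument with $H^\infty$ in place of $H^p$ works exactly when $t\mapsto g'_t$ is continuous at $t=1$ in sup norm, that is, when $g'\in\mathcal A$. This gives sufficiency of $g'\in\mathcal A$ for compactness into $H^\infty$.

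The main obstacle is the necessity of $g'\in\mathcal A$ for compactness into $H^\infty$. I will argue as follows. The sequence $f_{a_n}$ with $a_n\to1$ is bounded in $H^1$, so compactness gives a subsequence along which $\hg f_{a_n}$ converges in $H^\infty$ to some $F$. Locally uniformly, $\hg f_{a_n}\to 2g'$, so $F=2g'$. Each $\hg f_a$ is an average of dilations $g'_t$ with $t\le1$, and its sup norm is controlled by $\sup_{t\le 1}\|g'_t\|_\infty$. The key observation is that $\hg f_a$ extends continuously to $\overline{\D}$: since $g'\in H^\infty$ is continuous on $|z|\le t$ for $t<1$, the near-$t=1$ part of the integral carries mass $\int_s^1\frac{1-a^2}{(1-at)^2}\,dt\to0$ as $s\to1$ for fixed $a<1$. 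A uniform limit of functions in $\mathcal A$ lies in $\mathcal A$, so $2g'\in\mathcal A$, which completes the proof.
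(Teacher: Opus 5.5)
Your proof is correct. The upper bound (Minkowski plus Fej\'er--Riesz), the lower bound via the normalized kernels $f_a$ with $\hg f_a\to 2g'$ locally uniformly, and the necessity of $g'\in\mathcal A$ all match the paper's argument. For the last point, both you and the paper show $\hg f_a\in\mathcal A$ and identify the $H^\infty$-limit of a subsequence as $2g'$; your truncation $\int_0^s+\int_s^1$ simply replaces the paper's dominated-convergence justification.

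The genuine difference is in proving that $g'\in H^p$ (or $g'\in\mathcal A$) implies compactness. The paper approximates $g'$ by polynomials $P_n'$ in $H^p$ (or in $\mathcal A$). It then applies the already-proved bound $\|\mathcal H_{g-P_n}\|_{H^1\to H^p}\le\pi\|g'-P_n'\|_{H^p}$ and concludes because each $\mathcal H_{P_n}$ has finite rank. You instead factor $\hg$ through the restriction map $H^1\to L^1([0,1])$. You then show that $h\mapsto\int_0^1 h(t)g'_t\,dt$ maps the unit ball of $L^1$ into the closed absolutely convex hull of the compact orbit $\{g'_t:0\le t\le 1\}$, which is compact by Mazur's theorem.

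The paper's route is more elementary. It needs neither vector-valued integration nor the (standard, but unstated in your write-up) fact that $\int_0^1 h(t)g'_t\,dt$ lies in that hull, which follows from Hahn--Banach separation or Riemann-sum approximation. Your route makes the mechanism transparent: compactness holds exactly because $t\mapsto g'_t$ is norm continuous up to $t=1$, and this single criterion covers both $p<\infty$ and the $\mathcal A$ case at $p=\infty$. Both arguments ultimately rest on the same fact, since norm continuity of dilations at $t=1$ is equivalent to $g'$ lying in the closure of the polynomials.
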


We finish this introduction with a couple of words on the notation already used. The letter $C$ will denote an absolute constant whose value depends on the parameters indicated in the subscript, and may change from one occurrence to another. If there exists a constant
$C>0$ such that $a\le Cb$, we will write either $a\lesssim b$ or $b\gtrsim a$. In particular, if $a\lesssim b\lesssim a$, then we write $a\asymp b$ and say that $a$ and $b$ are comparable.

\section{Boundedness from \texorpdfstring{$H^1$ to $H^p$}{H1 to Hp}}

In this section we will prove Theorem~\ref{OnH1}.

\begin{Prf}{\em{Theorem~\ref{OnH1}.}}
    We begin by characterizing boundedness. To this end, let $1\leq p\leq \infty$ and assume that $g'\in H^p$. Then,  Fubini's theorem in the case $p=1$ and Minkowki's inequality in the case $1<p\leq\infty$, together with  F\'ejer-Riesz inequality yield
    $$
    \|\H_g(f)\|_{H^p}\leq \int_0^1 |f(t)|\|g'(t\cdot)\|_{H^p}\,dt\leq \pi\|g'\|_{H^p}\|f\|_{H^1},\quad f\in H^1.
    $$
Hence $\H_g$ is bounded from $H^1$ to $H^p$ and  $\|\H_g\|_{H^1\to H^p}\le \pi \|g'\|_{H^p}$.

Next, we assume that $\hg:H^1\to H^p$ is bounded. We consider the test functions
$$
F_\rho(z)=\frac{1-\rho^2}{(1-\rho z)^2}, \quad 0<\rho<1,
$$
which satisfy $\|F_\rho\|_{H^1}=1$. Now, we are going to prove that for every $g\in\H(\D)$
\begin{equation}\label{xxxxx}
 \lim_{\rho\to 1^-} \H_g(F_\rho)(z)=2g'(z), \quad z\in\D
\end{equation}
holds, where the convergence is uniform on compact subsets of $\D$.

Take $R\in (0,1)$ and $\varepsilon\in (0,1)$.
Since $g'$ is uniformly continuous in $\overline{D(0,R)}$,  there exists $\delta>0$ such that
\begin{equation}\label{eq:h11}
|g'(tz)-g'(z)|<\varepsilon \quad\text{when  $t\in [0,1]$, $z\in \overline{D(0,R)}$ and $1-t<\delta$. }
\end{equation}

Bearing in mind that $\int_0^1 F_\rho(t)\,dt=1+\rho$, for any $\rho \in [0,1)$ such that   $1-\varepsilon<\rho$ and $z\in \overline{D(0,R)}$
\begin{align*}
 | \H_g(F_\rho)(z)-2g'(z)|&\leq\left|\int_0^1F_\rho(t)\big(g'(tz)-g'(z)\big)\,dt\right|+|\rho-1||g'(z)|\\
 &\leq \left(\int_0^{1-\delta}+\int_{1-\delta}^1\right)\left|\frac{1-\rho^2}{(1-\rho t)^2}\big(g'(tz)-g'(z)\big)\right|\,dt+|\rho-1||g'(z)|\\
 &\leq 2\sup_{\z \in\overline{D(0,R)} }|g'(\z)|\int_0^{1-\delta} \frac{1-\rho^2}{(1-\rho t)^2}\,dt+(\rho+1)\varepsilon+(1-\rho)
|g'(z)|,
\\
 &\leq \left( \frac{4}{\delta^2}+1\right)(1-\rho)\sup_{\z \in\overline{D(0,R)} }|g'(\z)|+2\varepsilon,
\end{align*}
which after letting $\rho\to 1^-$ proves \eqref{xxxxx}. Therefore, for a fixed $0<r<1$
$$
\lim_{\rho\to1^-}M_p(r, \H_g(F_\rho))=2M_p(r,g').
$$
Noting that $\H_g$ is bounded from $H^1$ to $H^p$, we have
$$
M_p(r, \H_g(F_\rho))\leq \|\H_g(F_\rho)\|_{H^p}\leq \|\H_g\|_{H^1\to H^p}, 
 \quad 0<\rho<1.
$$
Hence, we obtain
$$
M_p(r,g')\leq \frac12\|\H_g\|_{H^1\to H^p},
$$
which implies that $g'\in H^p$.

To finish the proof for the case $1\leq p<\infty$, it suffices to show that (i) implies (ii). If $\H_g$ is bounded from $H^1$ to $H^p$, we have shown that $g'\in H^p$. Therefore, there exists a sequence of polynomials $P_n$ such that $\lim_{n\to\infty}\|g'-P_n'\|_{H^p}=0$. Consequently,
$$
\lim_{n\to\infty}\|\H_g-\H_{P_n}\|_{H^1\to H^p}=\lim_{n\to\infty}\|\H_{g-P_n}\|_{H^1\to H^p}\lesssim \lim_{n\to\infty}\|g'-P'_n\|_{H^p}=0.
$$
Notice that each $\H_{P_n}$ is finite-rank, so it is compact from $H^1$ to $H^p$. Hence $\H_g$ is also compact on $H^1$ to $H^p$.

Similarly, we obtain that $\hg:H^1 \to H^{\infty}$ is compact if $g'\in \mathcal A$, because $\mathcal A$ is the $H^{\infty}$-closure of the analytic polynomials. To see that  $g'\in \mathcal A$ are the only symbols for which $\hg:H^1 \to H^{\infty}$ is compact, we note that $\H_g(F_\rho) \in \mathcal A$ for every $0<\rho<1$. This follows from the integrand being uniformly bounded and the dominated convergence theorem. Assuming $\hg$ is compact, there is a sequence $\{\rho_n\}$ converging to $1$ such that $\lim_{n\to\infty}\H_g(F_{\rho_n})  = F$ in $H^\infty$ for some $F\in H^\infty$. Since $ \mathcal A$ is norm closed, we obtain $F\in \mathcal A$, and using \eqref{xxxxx}, we conclude $g' = F/2\in \mathcal A$.
\end{Prf}

\begin{remark}
Since $H^1$ embeds continuously in $L^1(0,1)$, a generalization of $g'\in H^p$ being sufficient for $\hg$ to be bounded $H^1 \to H^p, \ 1\leq p\leq \infty$, could be found in \cite[Proposition 3.1]{Blasco2022}.
\end{remark}

\begin{remark}
By \cite[Theorem~6.7]{D}  the Hadamard product induced by $g'(z)=\sum_{k=0}^{\infty}(k+1)\widehat{g}(k+1)z^k$ is bounded from $H^1$ and $H^2$  if and only if
\begin{equation}\label{eq:gprimeH12}
\sup_{N\in \mathbb{N}}\frac{1}{N^2}\sum_{k=0}^N(k+1)^4 |\widehat{g}{(k+1)}|^2<\infty.
\end{equation}
By considering $g(z)=\sum_{k=1}^{\infty} k^{-1/2}  z^k$, we observe that \eqref{eq:gprimeH12} is strictly weaker than the condition $g'\in H^2$. That is, 
the boundedness of $\hg: H^1\to H^2$ is sufficient but not necessary so that  the Hadamard product induced by $g'$ is bounded from $H^1$ and $H^2$. 

\end{remark}

\section{Boundedness from  $H^p$ to $H^2$, $1<p\le 2$.}
\par
We note that $\hg$ has a representation in terms of the Taylor coefficients, which will be used repeatedly throughout this manuscript. Indeed, a simple computation shows that if
$g(z)=\sum_{n=0}^\infty \widehat{g}(n)z^n\in \hol (\D )$ and
$f(z)=\sum_{n=0}^\infty \widehat{f}(n)z^n\in H^1$ then
\begin{equation}
\begin{split}\label{Hgcoef}
\mathcal{H}_g(f)(z)&=\sum_{k=0}^{\infty} \left( (k+1)\widehat{g}(k+1)\int_0^1t^k f(t)\,dt\right)z^k\\
&=\sum_{k=0}^\infty \left
((k+1)\widehat{g}(k+1)\sum_{n=0}^\infty \frac{\widehat{f}(n)}{n+k+1}\right
)z^k.
\end{split}
\end{equation}

Theorem~\ref{thm:H2target}(ii) is contained in the next result.

\begin{theorem}\label{th:hardypmenor2} Assume  that $1<p\le 2$ and
$g\in \H(\D)$. Then $\hg$ is bounded from $H^p$ to
$H^2$ if and only if $g\in \Lambda \left(2, \frac{1}{p}\right).$
Moreover,
\begin{equation}\label{eq:norm2}
\| \hg\|_{H^p\to H^2}\asymp \|g-g(0)\|_{ \Lambda \left(2, \frac{1}{p}\right).}.
\end{equation}
Furthermore, $\H_g$ is compact from $H^p$ to $H^2$ if and only if $g\in \lambda(2,1/p)$.
\end{theorem}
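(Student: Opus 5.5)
Since $\hg(f)$ has the Taylor coefficients in \eqref{Hgcoef}, we have
$$
\|\hg(f)\|_{H^2}^2=\sum_{k\ge0}(k+1)^2|\widehat g(k+1)|^2\left|\int_0^1t^kf(t)\,dt\right|^2 .
$$
The plan is to reduce everything to this coefficient expression. By Hardy--Littlewood, a function $g$ belongs to $\Lambda(2,\frac1p)$ if and only if $M_2(r,g')\lesssim(1-r)^{\frac1p-1}$. Computing $M_2^2(r,g')$ and testing it on dyadic blocks, this is in turn equivalent to the block condition
$$
\sum_{2^n\le k<2^{n+1}}k^2|\widehat g(k)|^2\lesssim 2^{2n(1-\frac1p)}=2^{2n/p'} .
$$
Similarly, $\lambda(2,\frac1p)$ corresponds to the little-oh version of this block condition.

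\textbf{Necessity.} Use the test functions $f_a(t)=(1-a)^{1/p'}\cdot(1-at)^{-1}$, $0<a<1$, whose $H^p$ norms are uniformly bounded. For $a=1-2^{-n}$ and $k\le 2^n$, direct estimation gives
$$
\int_0^1 t^kf_a(t)\,dt\gtrsim (1-a)^{1/p'}\log\frac{1}{1-a}\ \ \text{or at least}\ \gtrsim (1-a)^{1/p'}.
$$
The lower bound $\int_{1-1/k}^1 (1-at)^{-1}\,dt\gtrsim 1$ suffices. Restricting the sum to the block $k\sim 2^n$ then yields the block condition, with constant bounded by $\|\hg\|^2$. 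For compactness, the functions $f_a$ tend to $0$ weakly (uniformly on compacta) as $a\to1$, so compactness forces the block sums to be $o(2^{2n/p'})$, that is, $g\in\lambda(2,1/p)$.

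\textbf{Sufficiency.} This is the main step. For $1<p\le2$ the Hardy inequality gives
$$
\sum_n (n+1)^{p-2}|\widehat f(n)|^p\lesssim\|f\|_{H^p}^p .
$$
By Hausdorff--Young we also have $\widehat f\in\ell^{p'}$. The moments $\mu_k(f)=\int_0^1t^kf(t)\,dt$ are values of a Hilbert-type matrix applied to $\widehat f$, and the key estimate I would aim for is a weighted $\ell^{p'}$ bound of the form
$$
\sum_k (k+1)^{p'-2}|\mu_k(f)|^{p'}\lesssim\|f\|_{H^p}^{p'} .
$$
I would obtain it either from the known boundedness of the Hilbert operator $\mathcal H$ on $H^p$ together with Hardy's inequality applied to the dual exponent, or directly via Schur's test on the Hilbert matrix with power weights. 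Given this bound, split $\sum_k k^2|\widehat g(k+1)|^2|\mu_k|^2$ into dyadic blocks $I_n$. Apply H\"older within each block with exponents $p'/2$ and $(p'/2)'$. Since $p'/2\ge1$, the block condition on $g$ bounds the $\ell^{(p'/2)'}$-norm of the weights $k^2|\widehat g(k)|^2k^{-2(p'-2)/p'}$ uniformly; equivalently, the block condition says $\sup_n 2^{-2n/p'}\sum_{I_n}k^2|\widehat g|^2<\infty$. Thus I would instead use the cruder bound $|\mu_k|\le \sup_{j\in I_n}|\mu_j|$, with $\sum_n 2^{2n/p'}\sup_{I_n}|\mu_j|^2\lesssim\|f\|_{H^p}^2$. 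This bound should follow from $|\mu_k|$ being essentially decreasing, together with $\ell^{p'}\subset\ell^2$ on dyadic sums when $p'\ge2$ (the case $p'=2$ works too).

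The delicate point is justifying the dyadic sup-bound on the moments $\mu_k$. Here I would use the fact that $\mu_k$ are moments of $f$ on $[0,1]$, so $\sup_{k\in I_n}|\mu_k|$ is controlled by averages of $|f|$ near $1-2^{-n}$; Fej\'er--Riesz / Carleson-measure estimates for the measure $dt$ on $[0,1]$ then give the $H^p$ control. Compactness sufficiency follows by approximating $g$ by its Taylor truncations: the tail has small block constant in $\lambda(2,1/p)$, so the norm estimate makes $\hg$ a norm limit of finite-rank operators.
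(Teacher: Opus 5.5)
\textbf{Gap in sufficiency.} Your necessity argument (the test functions $f_a$, $a=1-2^{-n}$, restricted to the block $I(n)$) matches the paper's. So do the compactness arguments, but they rest on the norm estimate. The gap is in sufficiency. The inequality that carries the whole proof is
$\sum_n 2^{2n/p'}\sup_{k\in I(n)}|\mu_k(f)|^2\lesssim\|f\|_{H^p}^2$,
and you never establish it. Each justification you offer fails:
\begin{itemize}
\item The weighted bound $\sum_k(k+1)^{p'-2}|\mu_k(f)|^{p'}\lesssim\|f\|_{H^p}^{p'}$ is false for $1<p<2$. Take your test function $f_a$ with $a=1-2^{-n}$, $n\ge1$, so $\|f_a\|_{H^p}\asymp1$. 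On $[1-2^{-n},1]$ one checks $\mu_k(f_a)\ge\frac18\,2^{-n/p'}$ for all $0\le k\le 2^n$. Hence the left side is $\gtrsim2^{-n}\sum_{k\le2^n}(k+1)^{p'-2}\asymp2^{n(p'-2)}\to\infty$. The Hardy--Littlewood coefficient inequality at the exponent $p'\ge2$ goes the other way.
\item The inclusion $\ell^{p'}\subset\ell^2$ is backwards when $p'>2$; one has $\ell^2\subset\ell^{p'}$. So even a correct $\ell^{p'}$-type bound would give no $\ell^2$ control.
\item $|\mu_k(f)|$ need not be essentially decreasing: for $f(z)=1-2z$ one has $\mu_0=0\ne\mu_1$. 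So $\sup_{k\in I(n)}|\mu_k|$ cannot be replaced by a single moment.
\item Your last paragraph names no inequality that actually sums over $n$.
\end{itemize}

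\textbf{The missing idea.} Dominate by the decreasing sequence $\nu_k=\int_0^1t^k|f(t)|\,dt\ge|\mu_k(f)|$, so that $\sup_{k\in I(n)}|\mu_k|\le\nu_{2^n}$. Set $x_n=2^{n/p'}\nu_{2^n}$ and obtain the needed $\ell^2$ bound for $x_n$ from an $\ell^p$ bound, using $p\le2$.

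The paper does this as follows. It gets $\sum_nx_n^p=\sum_n2^{n(p-1)}\nu_{2^n}^p\lesssim\|\widetilde{\mathcal H}(f)\|_{H^p}^p\lesssim\|f\|_{H^p}^p$ from Pavlovi\'c's theorem on decreasing coefficients and the boundedness of the sublinear Hilbert operator. It then writes $\sum_nx_n^2\le\sup_nx_n^{2-p}\sum_nx_n^p$, with $\sup_nx_n\lesssim\|f\|_{H^p}$ coming from $M_\infty(r,f)\lesssim(1-r)^{-1/p}\|f\|_{H^p}$. Together with the block condition $\|\Delta_ng'\|_{H^2}^2\lesssim2^{2n/p'}\|g-g(0)\|^2_{\Lambda(2,\frac1p)}$, this closes the estimate.

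Your Fej\'er--Riesz idea can also be made rigorous, and more elementarily, without $\widetilde{\mathcal H}$:
\begin{itemize}
\item On the shells $J_m=[1-2^{-m},1-2^{-m-1}]$, H\"older gives $b_m:=2^{m/p'}\int_{J_m}|f|\le(\int_{J_m}|f|^p)^{1/p}$.
\item The bound $t^{2^n}\le e^{-2^n(1-t)}$ gives $x_n\le\sum_mc_{n-m}b_m$ with $\{c_j\}\in\ell^1$, so Young's inequality applies.
\item Because $2/p\ge1$ and by Fej\'er--Riesz, $\sum_mb_m^2\le(\int_0^1|f|^p)^{2/p}\lesssim\|f\|_{H^p}^2$.
\end{itemize}
In both versions it is the exponent $p\le2$, not $p'$, that makes the dyadic sum converge.
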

Let us mention that the case $p=2$ in Theorem~\ref{th:hardypmenor2} was proved in \cite[Theorem~1]{GaGiPeSis}.
In order to prove Theorem~\ref{th:hardypmenor2} we need to introduce some notation and recall some results.

Throughout the paper we shall use the following notation:
If $g(z)=\sum_{k=0}^\infty \widehat{g}(k) z^k\in\hol(\D)$ and $n\ge 0$, we set
$$
\Delta_ng(z)=\sum_{k\in I(n)}\widehat{g}(k)z^k
$$
where $I(n)=\left\{k\in\N:\, 2^n\le k\le 2^{n+1}-1 \right\}$ if $n\in \N$ and $I(0)=\{0,1\}$.
\par Let us also recall several distinct characterizations of  $\Lambda(p,\alpha)$ and $\lambda(p,\alpha)$
spaces, see \cite{BSS}, \cite{D},  and \cite{MP}.
\begin{other}[{\cite[Theorem A]{GaGiPeSis}}]\label{th:mlip}
Suppose that $1<p<\infty$, $0<\alpha<1$ and $g\in \H(\D)$. The
following conditions are equivalent:
\begin{itemize}
    \item [(i)]$g\in \Lambda(p,\a)$;
\item [(ii)] $M_p(r,g')=\ogr\left(\frac{1}{(1-r)^{1-\alpha}}\right)$,\,\,\,\,\,as $r\to 1^-$;
\item [(iii)] $||\Delta_ng||_{H^p}=\ogr\left(2^{-n\alpha}\right)$,\quad\,\,\,\, as $n\to\infty$;
\item [(iv)] $||\Delta_ng'||_{H^p}=\ogr\left(2^{n(1-\alpha)}\right)$,\quad as $n\to\infty$.
\end{itemize}
Moreover,
\begin{equation*}\begin{split}
\|g\|_{ \Lambda(p,\a)} &\asymp |g(0)|+\sup_{0\le r<1} M_p(r,g')(1-r)^{1-\alpha}
\asymp \sup_{n\in \N\cup\{0\}} 2^{n\alpha}\|\Delta_n g\|_{H^p}
\\ &  \asymp  |g(0)|+ \sup_{n\in \N\cup\{0\}} 2^{-n(1-\alpha)}\|\Delta_n g'\|_{H^p}.
\end{split}\end{equation*}
\end{other}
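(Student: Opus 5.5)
The plan is to prove the cycle (i)$\Leftrightarrow$(ii)$\Leftrightarrow$(iv)$\Leftrightarrow$(iii), keeping track of constants so that the norm equivalences come out at the same time. The equivalence (i)$\Leftrightarrow$(ii), together with the corresponding norm comparison, is exactly the Hardy--Littlewood theorem \eqref{hl32}. The remaining equivalences rest on one tool. For $1<p<\infty$, the M.~Riesz theorem gives that the partial sum operators $S_N$ are uniformly bounded on $H^p$. Summation by parts then yields the following multiplier lemma: if $(\lambda_k)_{k\in I(n)}$ satisfies
$$
\sup_k|\lambda_k|+\sum_{k\in I(n)}|\lambda_{k+1}-\lambda_k|\le A,
$$
then $\bigl\|\sum_{k\in I(n)}\lambda_k\widehat h(k)z^k\bigr\|_{H^p}\le C_pA\|\Delta_n h\|_{H^p}$. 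In particular the $\Delta_n$ are uniformly bounded on $H^p$.

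\emph{(iii)$\Leftrightarrow$(iv).} The coefficients of $\Delta_n g'$ are $k\widehat g(k)$ for $k\in I(n)$, shifted by one index. The shift only multiplies by $z^{-1}$, which does not change $H^p$ norms of polynomials vanishing at $0$. Apply the lemma with $\lambda_k=k/2^n$, which is monotone and bounded by $2$, and with its reciprocal $\lambda_k=2^n/k$. This gives $\|\Delta_ng'\|_{H^p}\asymp 2^n\|\Delta_n g\|_{H^p}$, with a minor adjustment at the endpoints of the blocks, which overlap neighbours by at most one index. The equivalence of (iii) and (iv), and of the corresponding suprema, follows immediately.

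\emph{(iv)$\Rightarrow$(ii).} Write $g'=\sum_n\Delta_ng'$ and use the triangle inequality together with the lemma applied to the multiplier $r^k$ on $I(n)$, which is monotone and bounded by $r^{2^n}$. This gives
$$
M_p(r,g')\lesssim \sum_n 2^{n(1-\alpha)}r^{2^n}\asymp (1-r)^{\alpha-1}.
$$
Here $\alpha<1$ is used, since the dyadic sum is then comparable to $\int_1^\infty x^{-\alpha}r^x\,dx$.

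\emph{(ii)$\Rightarrow$(iv).} Put $r_n=1-2^{-n}$. Since $\Delta_n g'$ is obtained from $g'_{r_n}(z)=g'(r_nz)$ by the multiplier $\lambda_k=r_n^{-k}$ on $I(n)$, the lemma applies. This multiplier is monotone and bounded by $r_n^{-2^{n+1}}\le e^{4}$, say. Hence
$$
\|\Delta_ng'\|_{H^p}\lesssim M_p(r_n,g')\lesssim 2^{n(1-\alpha)}.
$$

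The main point needing care is the multiplier lemma itself, namely the summation by parts against the uniformly bounded Riesz projections $S_N$. This is where $1<p<\infty$ is essential. The other delicate point is bookkeeping the index shift between $g$ and $g'$ and the special block $I(0)$, which accounts for the $|g(0)|$ terms in the norm equivalences.
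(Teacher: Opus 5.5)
The paper does not prove this statement. It quotes it as \cite[Theorem A]{GaGiPeSis}, with the underlying results attributed to \cite{BSS}, \cite{D} and \cite{MP}.

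Your argument is a correct, self-contained proof along the standard lines. For (i)$\Leftrightarrow$(ii) you use the quantitative Hardy--Littlewood theorem. For the dyadic conditions you combine uniform boundedness of the partial sum operators on $H^p$, $1<p<\infty$, with Abel summation (your multiplier lemma). This is how such block characterizations are usually obtained in the cited literature. I checked the endpoint bookkeeping in (iii)$\Leftrightarrow$(iv), including the block $I(0)=\{0,1\}$ and the $|g(0)|$ terms, and it works as you indicate. What your version adds over the citation is transparency: it shows exactly where $1<p<\infty$ enters (M.~Riesz) and where $\alpha<1$ enters (the estimate $\sum_n 2^{n(1-\alpha)}r^{2^n}\asymp(1-r)^{\alpha-1}$).

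Three small repairs are needed.

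\begin{itemize}
\item In (ii)$\Rightarrow$(iv), your choice $r_n=1-2^{-n}$ gives $r_0=0$, so the multiplier $r_0^{-k}$ is undefined on $I(0)$. Treat $n=0$ directly via $|\widehat{g'}(k)|\le 2^k M_p(1/2,g')$, or use $r_n=1-2^{-n-1}$ instead.
\item Before invoking \eqref{hl32} for (ii)$\Rightarrow$(i), you should note that (ii) already forces $g\in H^p$, because \eqref{hl32} is stated only for functions in $H^p$. This follows from $M_p(r,g)\le|g(0)|+\int_0^r M_p(s,g')\,ds$ together with the integrability of $(1-s)^{\alpha-1}$, which uses $\alpha>0$. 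The same estimate also produces the $|g(0)|$ term when comparing with $\|g\|_{\Lambda(p,\alpha)}$.
\item In (iv)$\Rightarrow$(ii), the $n=0$ block has multiplier $r^k$ bounded by $1$ rather than by $r^{2^0}$. This only adds a bounded term, so it is harmless.
\end{itemize}
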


\begin{remark}
 The corresponding results for the little-oh space $\lambda(p,\alpha)$ remain true,
and they can be proved following the proofs in the references for Theorem \ref{th:mlip}.   
\end{remark}

 Let us consider the sublinear Hilbert operator $\hti $ defined
by
 $$
 \hti(f)(z)=\int_0^1\frac{|f(t)|}{1-tz}\,dt, \quad f\in H^1.
 $$

Observe that the Taylor coefficients of $\hti(f)$ are given by the sequence $\left\{ \int_{0}^1 t^n |f(t)|\,dt     \right\}$, which is positive
and decreasing. Then by \cite[Thorem~A]{Padec}

\begin{equation}\begin{split}\label{eq:htinorm}
\| \hti(f)\|^p_{H^p} & \asymp \sum_{k=0}^\infty   (k+1)^{p-2}\left(\int_{0}^1 t^k |f(t)|\,dt  \right)^p
\\ & \asymp \left(\int_{0}^1  |f(t)|\,dt  \right)^p+ \sum_{n=0}^\infty 2^{n(p-1)}\left(\int_{0}^1 t^{2^n} |f(t)|\,dt  \right)^p.
\end{split}\end{equation}

 The boundedness of $\widetilde{H}$ on $H^p$, $1\le p<\infty$, is a key ingredient in the proof of several results of this paper  \cite[Theorem~5]{GaGiPeSis}.

\begin{Prf}{\em{Theorem~\ref{th:hardypmenor2}.}}
Assume that $g\in  \Lambda \left(2, \frac{1}{p}\right)$.
First, bearing in mind the well-known inequality
$$M_\infty(r,f)\le C_p\|f\|_{H^p}(1-r)^{-1/p}, \quad 0\le r<1, f\in \H(\D). $$
It follows that
\begin{equation}\label{eq:moments}
\int_0^1 t^k| f(t)|\, dt \le C_p \|f\|_{H^p} \int_0^1 t^k( 1-t)^{-1/p}\le C_p \frac{ \|f\|_{H^p}}{(k+1)^{1/p'}}, \quad k \in \N\cup\{0\}.
\end{equation}
Therefore, by \eqref{eq:moments}, Theorem~\ref{th:mlip},   \eqref{eq:htinorm} and \cite[Theorem~5]{GaGiPeSis}
\begin{equation*}\begin{split}
&\| \hg(f)\|^2_{H^2}= \sum_{k=0}^\infty   (k+1)^{2}|\widehat{g}(k+1)|^2\left|\int_{0}^1 t^k f(t)\,dt  \right|^2
\\ & \le \sum_{n=0}^\infty \sum_{k\in I(n)}  (k+1)^{2}|\widehat{g}(k+1)|^2 \left(\int_{0}^1 t^k |f(t)|\,dt  \right)^2
\\ & \lesssim \|g-g(0)\|^2_{ \Lambda \left(2, \frac{1}{p}\right)} \left(\int_{0}^1  |f(t)|\,dt  \right)^2+
 \sum_{n=0}^\infty
\left(\int_{0}^1 t^{2^n} |f(t)|\,dt  \right)^2 \sum_{k\in I(n)}  (k+1)^{2}|\widehat{g}(k+1)|^2
\\ & \lesssim \|f\|^{2-p}_{H^p}\|g-g(0)\|^2_{ \Lambda \left(2, \frac{1}{p}\right)} \left(\int_{0}^1  |f(t)|\,dt  \right)^p
\\ & + \|f\|^{2-p}_{H^p}   \sum_{n=0}^\infty
\left(\int_{0}^1 t^{2^n} |f(t)|\,dt  \right)^p 2^{-n \frac{2-p}{p'}} \sum_{k\in I(n)}  (k+1)^{2}|\widehat{g}(k+1)|^2
\\ & \lesssim \|f\|^{2-p}_{H^p}\|g-g(0)\|^2_{ \Lambda \left(2, \frac{1}{p}\right)}\left[ \left(\int_{0}^1  |f(t)|\,dt  \right)^p+
\sum_{n=0}^\infty
2^{n(p-1)}\left(\int_{0}^1 t^{2^n} |f(t)|\,dt  \right)^p\right]
\\ & \lesssim \|g-g(0)\|^2_{ \Lambda \left(2, \frac{1}{p}\right)}\|f\|^{2-p}_{H^p}  \| \hti(f)\|^p_{H^p} \lesssim \|g-g(0)\|^2_{ \Lambda \left(2, \frac{1}{p}\right)}\|f\|^{2}_{H^p}, \quad f \in \H(\D).
\end{split}\end{equation*}
Therefore, $\hg: H^p \to H^2$ is bounded and  $\| \hg\|_{H^p\to H^2}\lesssim \|g-g(0)\|_{ \Lambda \left(2, \frac{1}{p}\right)}.$

\par Conversely, assume that  $\hg: H^p \to H^2$ and let us prove that $g\in  \Lambda \left(2, \frac{1}{p}\right)$. For each $n\in \N\cup\{0\}$, let us consider the function
$$f_{2^n}(z)=\frac{2^{\frac{-n}{p'}}}{1-a_{2^n}z}\, \quad a_{2^n}=1-2^{-n}.$$
A calculation shows that
$\sup_{n\in \N\cup\{0\}}\| f_{2^n}\|_{H^p}\asymp 1.$
Moreover, there is $C>0$ such that
$$\int_{0}^1 t^{k}f_{2^n}(t)\,dt \ge \int_{a_{2^n}}^1 t^{k}f_{2^n}(t)\,dt\ge C 2^{\frac{-n}{p'}}, \quad \text{for all $k\in I(n)$}.$$
Therefore, for $n\in \N\cup\{0\}$
\begin{equation*}\begin{split}
\| \hg\|^2_{H^p\to H^2}& \gtrsim \sup_{N \in \N\cup\{0\}}\sum_{k=0}^{\infty}  (k+1)^{2}|\widehat{g}(k+1)|^2\left|\int_{0}^1 t^k f_{2^N}(t)\,dt  \right|^2
\\ & \ge \sum_{k\in I(n)}  (k+1)^{2}|\widehat{g}(k+1)|^2\left|\int_{0}^1 t^k f_{2^n}(t)\,dt  \right|^2
\\ & \geq C 2^{\frac{-2n}{p'}}\sum_{k\in I(n)}  (k+1)^{2}|\widehat{g}(k+1)|^2.
\end{split}\end{equation*}
That is, $$\sup_{n\in \N\cup\{0\}} 2^{\frac{-2n}{p'}}\sum_{k\in I(n)}  (k+1)^{2}|\widehat{g}(k+1)|^2\lesssim \| \hg\|^2_{H^p\to H^2}.$$ Therefore,  by
Theorem~\ref{th:mlip},  $g\in  \Lambda \left(2, \frac1p\right)$ and $ \|g-g(0)\|_{ \Lambda \left(2, 1/p\right)}\lesssim \| \hg\|_{H^p\to H^2} $.

Next, we are going to prove the compactness part. Assume first that
$g\in  \lambda \left(2, \frac1p\right)$.
Since it is the closure of the analytic polynomials in
$g\in  \Lambda \left(2, \frac1p\right)$, there exists a sequence of polynomials $\{P_n\}$ such
that
$$
\|(g-P_n)-(g(0)-P_n(0))\|_{\Lambda(2,1/p)}
\to0,\quad n\to\infty.
$$
By the boundedness estimate,
\begin{align*}
\|\H_g-\H_{P_n}\|_{H^p\to H^2}
&=
\|\H_{g-P_n}\|_{H^p\to H^2}\\
&\lesssim
\|(g-P_n)-(g(0)-P_n(0))\|_{\Lambda(2,\frac{1}{p})}
\to0,\quad n\to\infty.
\end{align*}
Since each $\H_{P_j}$ has finite rank, it follows that
$\H_g:H^p\to H^2$ is compact.

Conversely, suppose $\H_g$ is compact. Then for the same bounded sequence $\{f_{2^n}\}$ in $H^p$ considered in the boundedness part, it is easy to see that $\lim_{n\to\infty}f_{2^n}(z)=0$ on any compact subset of $\D$. Therefore, the compactness of $\H_g$ implies that $\lim_{n\to\infty}\|\H_gf_{2^n}\|_{H^2}=0$. Then the same steps as in the boundedness part show that 
$$
\lim_{n\to\infty} 2^{\frac{-2n}{p'}}\sum_{k\in I(n)}  (k+1)^{2}|\widehat{g}(k+1)|^2\lesssim \lim_{n\to\infty} \| \hg f_{2^n}\|^2_{ H^2}=0,
$$
which means $g\in\lambda(2,1/p)$.
This finishes the proof.
\end{Prf}

\section{Boundedness from  $H^p$ to $H^2$, $2<p<\infty$.}

Theorem~\ref{thm:H2target}(iii) is contained in the next result.

\begin{theorem}\label{th:hardypmayor2} 
Assume  that $2<p<\infty$ and
$g\in \H(\D)$.
Then the following statements are equivalent:
\begin{itemize}
\item[(a)] $\H_g:H^p\to H^2$ is bounded;
\item[(b)] $\H_g:H^p\to H^2$ is compact;
\item[(c)] $g'\in A_{\frac{\tilde p}2}^{2,\tilde p}$, where
$\frac1{\tilde p}=
\frac12-\frac1p
$.
\end{itemize}
Moreover,
\begin{equation}\label{eq:norm1}
\| \hg\|_{H^p\to H^2}\asymp \|g'\|_{A^{2,\tilde{p}}_{\frac{\tilde{p}}{2}}}.
\end{equation}

\end{theorem}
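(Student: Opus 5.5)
The plan is to discretize everything dyadically and reduce both directions to a Hölder/duality pairing between $\ell^{p/2}$ and $\ell^{\tilde p/2}$, using the exponent relation $\frac2p+\frac2{\tilde p}=1$. Set $b_n=\sum_{k\in I(n)}(k+1)^2|\widehat g(k+1)|^2$, which is comparable to $\|\Delta_n g'\|_{H^2}^2$ up to an index shift. By the standard dyadic description of mixed-norm spaces with $M_2$ (Parseval on each circle, plus monotonicity in $r$),
$$
\|g'\|^{\tilde p}_{A^{2,\tilde p}_{\tilde p/2}}\asymp \sum_{n\ge0} 2^{-n\left(\frac{\tilde p}{2}+1\right)} b_n^{\tilde p/2}.
$$
We will check the exponent arithmetic $\frac{\tilde p}{p}=\frac{\tilde p}{2}-1$, hence $\tilde p\,\frac{p-1}{p}=\frac{\tilde p}2+1$ and $\frac{2}{p'}\cdot\frac{\tilde p}{2}=\frac{\tilde p}{2}+1$. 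This identity is what makes the two directions match.

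\emph{Sufficiency.} Write $\mu_n(f)=\int_0^1 t^{2^n}|f(t)|\,dt$. As in the proof of Theorem~\ref{th:hardypmenor2}, \eqref{Hgcoef} and the monotonicity of the moments give
$$
\|\hg f\|_{H^2}^2\lesssim b_0\Big(\int_0^1|f|\Big)^2+\sum_n b_n\mu_n(f)^2 .
$$
We write $b_n\mu_n^2=\big[2^{\frac{2n(p-1)}{p}}\mu_n^2\big]\big[2^{-\frac{2n(p-1)}{p}}b_n\big]$ and apply Hölder with exponents $p/2$ and $\tilde p/2$. The first factor becomes $\big(\sum_n 2^{n(p-1)}\mu_n^p\big)^{2/p}$, which by \eqref{eq:htinorm} and the boundedness of $\hti$ on $H^p$ \cite[Theorem~5]{GaGiPeSis} is $\lesssim\|f\|_{H^p}^2$. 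The second factor is $\big(\sum_n 2^{-n(\frac{\tilde p}2+1)}b_n^{\tilde p/2}\big)^{2/\tilde p}\asymp\|g'\|^2_{A^{2,\tilde p}_{\tilde p/2}}$. The term with $\int_0^1|f|$ is handled identically, or absorbed using $\int_0^1|f|\lesssim\|f\|_{H^p}$. This yields (c)$\Rightarrow$(a) together with the upper norm bound.

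\emph{Necessity.} We will test on $f_\lambda=\sum_{n\le N}\lambda_n f_{2^n}$ with $\lambda_n\ge0$ and $f_{2^n}(z)=2^{-n/p'}(1-a_{2^n}z)^{-1}$ as in the proof of Theorem~\ref{th:hardypmenor2}. Since every $f_{2^n}$ is positive on $[0,1)$, for $k\in I(n)$ we get
$$
\int_0^1 t^kf_\lambda(t)\,dt\ge \lambda_n\int_0^1t^kf_{2^n}(t)\,dt\gtrsim \lambda_n2^{-n/p'},
$$
hence $\|\hg\|^2\|f_\lambda\|_{H^p}^2\gtrsim\sum_n b_n2^{-2n/p'}\lambda_n^2$. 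The key step, and the expected main obstacle, is the estimate
$$
\|f_\lambda\|_{H^p}\lesssim\Big(\sum_n\lambda_n^p\Big)^{1/p}.
$$
The Taylor coefficients $c_k=\sum_n\lambda_n2^{-n/p'}a_{2^n}^k$ are positive and decreasing, so \cite[Theorem~A]{Padec} (as in \eqref{eq:htinorm}) reduces this to bounding $\sum_m2^{m(p-1)}\big(\sum_n\lambda_n2^{-n/p'}a_{2^n}^{2^m}\big)^p$. Using $a_{2^n}^{2^m}\lesssim e^{-2^{m-n}}$, we split into $n\ge m$ and $n<m$. The first part is a discrete Hardy-type inequality with geometric kernel $2^{(m-n)/p'}$; the second decays super-exponentially. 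Both are controlled by $\sum\lambda_n^p$ via Schur's test or Hölder. Once this is in place, duality of $\ell^{p/2}$ with $\ell^{\tilde p/2}$, taken over finite $N$ and followed by $N\to\infty$, gives $\sum_n(2^{-2n/p'}b_n)^{\tilde p/2}\lesssim\|\hg\|^{\tilde p}$. By the exponent identity above, this is exactly (a)$\Rightarrow$(c) with the lower norm bound.

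\emph{Compactness.} (b)$\Rightarrow$(a) is trivial. For (c)$\Rightarrow$(b), let $P_N$ be the Taylor polynomials of $g$. Since $\tilde p<\infty$, dominated convergence in the dyadic sum gives $\|(g-P_N)'\|_{A^{2,\tilde p}_{\tilde p/2}}\to0$. The norm estimate then gives $\|\hg-\H_{P_N}\|_{H^p\to H^2}\to0$, and each $\H_{P_N}$ has finite rank, so $\hg$ is compact.
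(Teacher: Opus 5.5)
Your proof is correct. The sufficiency and compactness parts coincide with the paper's: H\"older in $\ell^{p/2}\times\ell^{\tilde p/2}$ against \eqref{eq:htinorm} and the boundedness of $\hti$ on $H^p$, followed by polynomial approximation. The necessity direction is where you take a genuinely different route.

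The paper builds a single test function adapted to the symbol, $Q_\rho(z)=\int_0^1\phi_\rho(x)(1-xz)^{-1}\,dx$ with $\phi_\rho(r)=\bigl(M_2(r,g_\rho')(1-r)^{1/2}\bigr)^{\tilde p/p}$. In effect it writes down the extremal element of the $\ell^{p/2}$--$\ell^{\tilde p/2}$ pairing explicitly, in continuous form. This needs Lemma~\ref{le:technical}: the $H^p$--$H^{p'}$ duality plus Fej\'er--Riesz to get $\|Q_\rho\|_{H^p}\asymp\|\phi_\rho\|_{L^p([0,1))}$, and the pointwise bound $Q_\rho\gtrsim\phi_\rho$. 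It also needs the dilation $g_\rho$, so that $\|\phi_\rho\|_{L^p}$ is finite before one divides by it.

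You instead take arbitrary nonnegative combinations $f_\lambda=\sum_{n\le N}\lambda_n f_{2^n}$ of the same kernels used for $1<p\le 2$, which is essentially a discretized $Q_\rho$. You prove $\|f_\lambda\|_{H^p}\lesssim\|\lambda\|_{\ell^p}$ via the decreasing-coefficient theorem and Young's inequality on $\mathbb{Z}$ with the summable kernel $h(j)=2^{j/p'}e^{-2^j}$, which dominates $2^{(m-n)/p'}a_{2^n}^{2^m}$. Abstract duality of $\ell^{p/2}$ with $\ell^{(p/2)'}=\ell^{\tilde p/2}$ then supplies the extremal choice, and truncation at $N$ replaces the dilation.

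What each buys: your route uses only tools already needed for sufficiency and treats $p\le 2$ and $p>2$ with the same test functions. It also makes transparent that the exponent $\tilde p/2$ arises as $(p/2)'$. The paper's route yields an explicit test function tied to $g$ and avoids the discrete convolution estimate, at the cost of the auxiliary lemma and the dilation argument.
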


To prove Theorem~\ref{th:hardypmayor2}, we need the following technical result.

\begin{lemma}\label{le:technical}
Let $0<q,t,s<\infty$, $1<p<\infty$ and $0\le \rho<1$. Let us consider the functions
$$\phi_\rho(r)=M_q^s(r,g'_\rho)(1-r)^t, \quad 0\le r<1,$$
and
$$Q_\rho(z)=\int_0^1 \frac{\phi_\rho(x)}{1-xz}\,dx, \quad z\in \D.$$
Then, there exists $C>0$ (which do not depend on $\rho$) such that
$$Q_\rho(r)\ge C \phi_\rho(r),  \quad 0\le r<1,$$
and
$$ \| Q_\rho\|_{H^p}\asymp \| \phi_\rho\|_{L^p([0,1))},$$
where the constants involved do not depend on $\rho$.
\end{lemma}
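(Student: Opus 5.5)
The plan is to exploit that $\phi_\rho$ is, up to the factor $(1-r)^t$, a nondecreasing function of $r$. Indeed, $g'_\rho(z)=g'(\rho z)$ is analytic, so $r\mapsto M_q(r,g'_\rho)$ is nondecreasing; this follows from subharmonicity for $q\ge1$ and from Hardy's convexity theorem for all $0<q<\infty$. Hence $M^s_q(r,g'_\rho)$ is nondecreasing in $r$. All constants below will depend only on $q,s,t,p$, and never on $\rho$ or $g$.

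For the pointwise bound, I fix $0\le r<1$ and use that the kernel is positive on $[0,1)$. I restrict the integral to $x\in[r,(1+r)/2]$. On this interval $M_q^s(x,g'_\rho)\ge M_q^s(r,g'_\rho)$, $(1-x)^t\ge 2^{-t}(1-r)^t$, and $1-xr\le 1-r^2\le 2(1-r)$. The interval has length $(1-r)/2$. Combining these gives
$$Q_\rho(r)\ge \frac{1-r}{2}\cdot\frac{2^{-t}M_q^s(r,g'_\rho)(1-r)^t}{2(1-r)}=2^{-t-2}\phi_\rho(r),$$
which is the first claim.

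For the norm equivalence, the Taylor coefficients of $Q_\rho$ are the moments $\mu_n=\int_0^1x^n\phi_\rho(x)\,dx$. These are nonnegative and decreasing, exactly as for $\hti$. So \cite[Theorem~A]{Padec}, as in \eqref{eq:htinorm}, gives
$$\|Q_\rho\|^p_{H^p}\asymp\sum_{n\ge0}(n+1)^{p-2}\mu_n^p.$$
It remains to compare this sum with $\int_0^1\phi_\rho^p$. For the upper bound $\|Q_\rho\|_{H^p}\lesssim\|\phi_\rho\|_{L^p}$, I would note that $Q_\rho=\hti(\phi_\rho)$ formally, with $\phi_\rho$ regarded as a function on $[0,1)$. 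Then I would invoke the classical Hardy-type inequality behind \cite[Theorem~5]{GaGiPeSis}: the moment map $\phi\mapsto\{\mu_n\}$ is bounded from $L^p(0,1)$ into the weighted sequence space $\ell^p((n+1)^{p-2})$ for $1<p<\infty$. This is the discrete Hilbert/Hardy inequality, obtained by Schur's test with test functions $(1-x)^{-1/(pp')}$ and $(n+1)^{-1/(pp')}$.

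For the lower bound, I would use the first part: $Q_\rho(r)\ge C\phi_\rho(r)$ on $[0,1)$. Then $\|\phi_\rho\|_{L^p}\lesssim\left(\int_0^1 |Q_\rho(r)|^p\,dr\right)^{1/p}$, and Fejér--Riesz, \cite[p.~46]{D}, gives $\int_0^1|Q_\rho(r)|^p\,dr\le\frac12\int_{-1}^1|Q_\rho|^p\le\frac{\pi}{2}\|Q_\rho\|^p_{H^p}$.

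The main obstacle I expect is the upper bound when $\|\phi_\rho\|_{L^p}=\infty$ or when $Q_\rho$ is not a priori in $H^1$. I would treat this by monotone convergence, applying the estimate to the truncations $\phi_\rho\chi_{[0,R]}$ and letting $R\to1$. I would also make sure the Schur-test constant depends only on $p$. The rest is routine.
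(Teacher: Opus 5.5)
Your proof is correct. The pointwise bound and the lower norm bound are essentially the paper's argument: the paper integrates over all of $[r,1)$ and uses $1-xr\le 1-r^2$, getting the constant $\frac{1}{2(t+1)}$, and your sub-interval $[r,(1+r)/2]$ is an inessential variant.

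The genuine difference is the upper bound $\|Q_\rho\|_{H^p}\lesssim\|\phi_\rho\|_{L^p([0,1))}$.

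\begin{itemize}
\item \emph{The paper's route (duality).} Using $(H^{p'})^\star\simeq H^p$ under the $H^2$-pairing and Fubini, it gets $\langle f,Q_\rho\rangle_{H^2}=\int_0^1\phi_\rho(x)f(x)\,dx$. Hölder plus Fejér--Riesz for $f\in H^{p'}$ then finish. In other words, $\phi\mapsto\int_0^1\frac{\phi(x)}{1-xz}\,dx$ is the adjoint of the restriction map $H^{p'}\to L^{p'}([0,1))$.
\item \emph{Your route (coefficients).} Positivity of $\phi_\rho$ makes the moments $\mu_n$ nonnegative and decreasing. The monotone-coefficient theorem \cite[Theorem~A]{Padec} then reduces matters to $\sum(n+1)^{p-2}\mu_n^p\lesssim\|\phi_\rho\|^p_{L^p}$. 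Your Schur test does verify this. With kernel $(n+1)^{1-2/p}x^n$, $h(x)=(1-x)^{-1/(pp')}$ and $k(n)=(n+1)^{-1/(pp')}$, one gets $\int_0^1(n+1)^{1-2/p}x^nh(x)^{p'}\,dx\asymp(n+1)^{-1/p}=k(n)^{p'}$ and $\sum_n(n+1)^{1-2/p}x^nk(n)^p\asymp(1-x)^{-1/p'}=h(x)^p$. The constants depend only on $p$.
\end{itemize}

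The paper's route is shorter and does not use the sign of $\phi_\rho$; it works for any $\phi\in L^p([0,1))$, at the price of invoking M.~Riesz duality. Yours avoids duality but needs positivity and the deeper monotone-coefficient theorem. In return it yields the two-sided description $\|Q_\rho\|^p_{H^p}\asymp\sum(n+1)^{p-2}\mu_n^p$, in the spirit of \eqref{eq:htinorm}.

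Two small corrections:
\begin{itemize}
\item The step $\int_0^1|Q_\rho(r)|^p\,dr\le\frac12\int_{-1}^1|Q_\rho|^p$ is false in general, since $0\le Q_\rho(-r)\le Q_\rho(r)$ makes the inequality go the other way. Simply use $\int_0^1|Q_\rho|^p\le\int_{-1}^1|Q_\rho|^p\le\pi\|Q_\rho\|^p_{H^p}$; this only changes the constant.
\item The truncation argument you anticipate is unnecessary. Since $\rho<1$, you have $\phi_\rho(x)\le M_\infty(\rho,g')^s(1-x)^t$, so $\phi_\rho$ is bounded and $\mu_n\downarrow 0$ automatically. This is the same bound the paper uses to justify Fubini.
\end{itemize}
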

\begin{proof}
First,
$$
Q_\rho(r)\ge \int_r^1 \frac{\phi_\rho(x)}{1-xr}\,dx\ge  \frac{M^s_q(r,g'_\rho)}{1-r^2} \int_r^1 (1-x)^t\, dx\ge C  \phi_\rho(r).
$$
Moreover, by the F\'ejer-Riesz inequality \cite[Theorem 3.13]{D} and the above inequality
$$ \| Q_\rho\|_{H^p}\gtrsim  \| Q_\rho\|_{L^p([0,1))} \gtrsim \| \phi_\rho\|_{L^p([0,1))}.$$
Since $1<p<\infty$, then $(H^{p'})^\star\simeq H^p$ under the $H^2$-pairing (with equivalence of norms) \cite[Theorem 9.8]{Zhu}. So,
using again the F\'ejer-Riesz inequality
\begin{equation*}\begin{split}
 \| Q_\rho\|_{H^p} & \asymp \sup_{\|f\|_{H^{p'}}=1} |\langle f, Q_\rho \rangle_{H^2}|
\\ & = \sup_{\|f\|_{H^{p'}}=1}  \left| \frac{1}{2\pi} \int_0^{2\pi} f(e^{i\theta}) \overline{Q_\rho(e^{i\theta})} \, d\theta \right|
\\ & = \sup_{\|f\|_{H^{p'}}=1}  \left| \frac{1}{2\pi} \int_0^{2\pi} f(e^{i\theta}) \overline{\int_0^1 \frac{\phi_\rho(x)}{1-xe^{i\theta}}\,dx} \, d\theta \right|
\\ & = \sup_{\|f\|_{H^{p'}}=1}  \left| \frac{1}{2\pi} \int_0^{1} \phi_\rho(x) \int_0^{2\pi} \frac{ f(e^{i\theta})}{1-xe^{-i\theta}}\,d\theta \, dx \right|
\\ & = \sup_{\|f\|_{H^{p'}}=1} \left|\int_0^{1} \phi_\rho(x) f(x)\,dx\right|\le \| \phi_\rho\|_{L^p([0,1))}  \| f\|_{L^{p'}([0,1))}\lesssim\| \phi_\rho\|_{L^p([0,1))}.
\end{split}\end{equation*}
The use of Fubini's theorem is justified by absolute integrability due to $\phi_\rho(x) \lesssim (1-x)^t$, where the constant is independent of $x$. This finishes the proof.
\end{proof}

Now we are ready to prove Theorem~\ref{th:hardypmayor2}.

\begin{Prf}{\em{Theorem~\ref{th:hardypmayor2}.}}
Assume that $g'\in A^{2,\tilde{p}}_{\frac{\tilde{p}}{2}}$. First, observe that
by \cite[Theorem~1]{MP}
\begin{equation}\begin{split}\label{eq:symbolnorm}
\|g'\|^{\tp}_{A^{2,\tilde{p}}_{\frac{\tilde{p}}{2}}} & \asymp \int_0^1 M_2^{\tp} (r,g')  (1-r)^{ \frac{ \tilde{p} }{2} }\,dr
\\ &\asymp \sum_{n=0}^\infty 2^{-n\left( \frac{\tp}{2}+1 \right)}
\left( \sum_{j\in I(n)}(j+1)^2|\widehat{g}(j+1)|^2 \right)^ {\frac{\tp}{2}}
 = \sum_{n=0}^\infty 2^{-n\left( \frac{\tp}{2}+1 \right)} \| \Delta_n g' \|^{\tp}_{H^2}.
\end{split}\end{equation}
Therefore, by \eqref{eq:symbolnorm},  H\"older's inequality with $p/2$,  \eqref{eq:htinorm} and \cite[Theorem~5]{GaGiPeSis}
\begin{equation*}\begin{split}
&\| \hg(f)\|^2_{H^2}= \sum_{k=0}^\infty   (k+1)^{2}|\widehat{g}(k+1)|^2\left|\int_{0}^1 t^k f(t)\,dt  \right|^2
\\ & \le \sum_{n=0}^\infty \sum_{k\in I(n)}  (k+1)^{2}|\widehat{g}(k+1)|^2 \left(\int_{0}^1 t^k |f(t)|\,dt  \right)^2
\\ & \lesssim \|g'\|^{2}_{A^{2,\tilde{p}}_{\frac{\tilde{p}}{2}}}\left(\int_{0}^1  |f(t)|\,dt  \right)^2+
 \sum_{n=0}^\infty
\left(\int_{0}^1 t^{2^n} |f(t)|\,dt  \right)^2 \sum_{k\in I(n)}  (k+1)^{2}|\widehat{g}(k+1)|^2
\\ & \lesssim \|g'\|^{2}_{A^{2,\tilde{p}}_{\frac{\tilde{p}}{2}}}\left(\int_{0}^1  |f(t)|\,dt  \right)^2
\\ & + \left( \sum_{n=0}^\infty
\left(\int_{0}^1 t^{2^n} |f(t)|\,dt  \right)^p 2^{n(p-1)} \right)^{\frac{2}{p}}
 \left(  \sum_{n=0}^\infty 2^{-n\left( \frac{\tp}{2}+1 \right)}
\left( \sum_{k\in I(n)}(k+1)^2|\widehat{g}(k+1)|^2 \right)^ {\frac{\tp}{2}}  \right)^{1-\frac{2}{p}}
\\ & \asymp  \|g'\|^{2}_{A^{2,\tilde{p}}_{\frac{\tilde{p}}{2}}}  \| \hti(f)\|^2_{H^p}
\\ & \lesssim \|g'\|^{2}_{A^{2,\tilde{p}}_{\frac{\tilde{p}}{2}}} \| f\|^2_{H^p}.
\end{split}\end{equation*}
 So, $\hg: H^p\to H^2$ is bounded and $\| \hg\|_{H^p\to H^2}\lesssim \|g'\|_{A^{2,\tilde{p}}_{\frac{\tilde{p}}{2}}}$. Furthermore, since the polynomials are dense in
$A^{2,\tilde{p}}_{\frac{\tilde{p}}{2}}$, then for any $g\in A^{2,\tilde{p}}_{\frac{\tilde{p}}{2}}$, there exists polynomials $\{P_n\}$ such that $\lim_{n\to\infty}\|P_n'-g'\|_{A^{2,\tilde{p}}_{\frac{\tilde{p}}{2}}}=0$. Hence, we have
$$
\lim_{n\to\infty}\|\H_g-\H_{P_n}\|_{H^p\to H^2}\lesssim \lim_{n\to\infty}\|P_n'-g'\|_{A^{2,\tilde{p}}_{\frac{\tilde{p}}{2}}}=0.
$$
Since $\H_{P_n}$ is finite rank, $\H_g$ is compact.

Reciprocally,  assume that $\hg: H^p\to H^2$ is bounded. Let be $0<\rho<1$. We will make use of Lemma~\ref{le:technical} in  the case $q=2$, $s=\frac{\tp}{p}$ and $t=\frac{\tp}{2p}$. That is,
$$\phi_\rho(r)=\left( M_2(r,g'_\rho)(1-r)^{1/2}\right)^{\frac{\tp}{p}}, \quad 0\le r<1.$$
Next,
\begin{equation}\begin{split}\label{eq:dilated}
 \| \mathcal{H}_{g_{\rho}}(Q_\rho)\|^2_{H^2} & = \sum_{n=0}^\infty \sum_{k\in I(n)}  (k+1)^{2}|\widehat{g}(k+1)|^2 \rho^{2k+2} \left(\int_{0}^1 t^k Q_\rho(t)\,dt  \right)^2
\\ & \ge  \sum_{n=0}^\infty \left(\int_{0}^1 t^{2^{n+1}} Q_\rho(t)\,dt  \right)^2 \sum_{k\in I(n)}  (k+1)^{2}|\widehat{g}(k+1)|^2  \rho^{2k+2}
\\ & =  \sum_{n=0}^\infty \left(\int_{0}^1 t^{2^{n+1}} Q_\rho(t)\,dt  \right)^2  \| \Delta_n (g_\rho)'\|^2_{H^2}.
\end{split}\end{equation}
 Moreover, by Lemma~\ref{le:technical}, the M. Riesz projection theorem and \cite[Lemma~10]{PelRathg}
\begin{equation}\begin{split}\label{eq:dilated2}
\int_{0}^1 t^{2^{n+1}} Q_\rho(t)\,dt & \ge
\int_{1-2^{-(n+1)}}^1 t^{2^{n+1}} Q_\rho(t)\,dt
\\ & \gtrsim \int_{1-2^{-(n+1)}}^1  Q_\rho(t)\,dt
\\ & \gtrsim  M^{\frac{\tp}{p}}_2 \left( 1-2^{-(n+1)} ,(g_\rho)' \right )\int_{1-2^{-(n+1)}}^1  (1-t)^{\frac{\tp}{2p}}\,dt
\\ & \asymp    M^{\frac{\tp}{p}}_2 \left( 1-2^{-(n+1)} , (g_\rho)' \right) 2^{-n\left( \frac{\tp}{2p}+1\right) }
\\ & \gtrsim M^{\frac{\tp}{p}}_2 \left( 1-2^{-(n+1) }, \Delta_n (g_\rho)' \right) 2^{-n\left( \frac{\tp}{2p}+1\right) }
\\ & \gtrsim  \| \Delta_n (g_\rho)'\|^{\frac{\tp}{p}}_{H^2}2^{-n\left( \frac{\tp}{2p}+1\right) }.
\end{split}\end{equation}
Therefore, joining \eqref{eq:dilated}, \eqref{eq:dilated2} and \eqref{eq:symbolnorm} it follows that
\begin{equation*}\begin{split}
\| \mathcal{H}_{g_{\rho}}(Q_\rho)\|^2_{H^2} & \gtrsim   \sum_{n=0}^\infty 2^{-n\left( \frac{\tp}{p}+2\right) }  \| \Delta_n (g_\rho)'\|^{2\left(1+\frac{\tp}{p}  \right)}_{H^2}
\\ & =  \sum_{n=0}^\infty 2^{-n\left( \frac{\tp}{2}+1\right) }  \| \Delta_n (g_\rho)'\|^{\tp   }_{H^2}
\\ & \asymp \|(g_\rho)'\|^{\tp}_{A^{2,\tilde{p}}_{\frac{\tilde{p}}{2}}}.
\end{split}\end{equation*}
Next, observe that \eqref{eq:dilated} implies that  $\| \mathcal{H}_{g}(Q_\rho)\|_{H^2}\ge  \| \mathcal{H}_{g_{\rho}}(Q_\rho)\|_{H^2}$ for any $0< \rho <1$.
Consequently, using the above inequalities and Lemma~\ref{le:technical}
\begin{equation*}\begin{split}
\| \hg\|^2_{H^p\to H^2} & \ge \frac{ \| \mathcal{H}_{g}(Q_\rho)\|^2_{H^2} }{\| Q_\rho\|^2_{H^p}}
 \ge  \frac{\| \mathcal{H}_{g_\rho}(Q_\rho)\|^2_{H^2} }{\| Q_\rho\|^2_{H^p}}
 \gtrsim   \frac{ \|(g_\rho)'\|^{\tp}_{A^{2,\tilde{p}}_{\frac{\tilde{p}}{2}}}}{\| Q_\rho\|^2_{H^p}}
\\ & \gtrsim   \frac{ \|(g_\rho)'\|^{\tp}_{A^{2,\tilde{p}}_{\frac{\tilde{p}}{2}}}}{\| \phi_\rho\|^2_{L^p([0,1))}}
 =    \frac{ \|(g_\rho)'\|^{\tp}_{A^{2,\tilde{p}}_{\frac{\tilde{p}}{2}}}}{ \|(g_\rho)'\|^{\frac{2\tp}{p}}_{A^{2,\tilde{p}}_{\frac{\tilde{p}}{2}}}}= \|(g_\rho)'\|^{2}_{A^{2,\tilde{p}}_{\frac{\tilde{p}}{2}}},
\end{split}\end{equation*}
that is,
$$  \|(g_\rho)'\|_{A^{2,\tilde{p}}_{\frac{\tilde{p}}{2}}}\lesssim \| \hg\|_{H^p\to H^2}, \quad 0<\rho<1.$$
So, letting $\rho\to 1^{-}$, it follows that
$g'\in A^{2,\tilde{p}}_{\frac{\tilde{p}}{2}}$ and $
\| \hg\|_{H^p\to H^2}\gtrsim \|g'\|_{A^{2,\tilde{p}}_{\frac{\tilde{p}}{2}}}.
$
 This finishes the proof.
\end{Prf}

\section{Proof of Theorem~\ref{thm:H2target} and some consequences.}

\begin{Prf}{\em{ Theorem~\ref{thm:H2target}. }}
Part (i) follows from Theorem~\ref{OnH1}.
Part (ii) follows from Theorem~\ref{th:hardypmenor2}.
 Part (iii) follows from Theorem~\ref{th:hardypmayor2}. It remains to prove Part (iv). Observe that $\hg(1)=\frac{g(z)-g(0)}{z}$. So if $\hg: H^\infty \to H^2$ is bounded, then $g\in H^2$ and $\| g-g(0)\|_{H^2}\le  \| \hg\|_{H^\infty\to H^2}.$ Reciprocally, if $g\in H^2$, then
\begin{equation*}\begin{split}
\| \hg(f)\|^2_{H^2}= & \sum_{k=0}^\infty  (k+1)^{2}|\widehat{g}(k+1)|^2 \left|\int_{0}^1 t^k f(t)\,dt  \right|^2
\\ \le & \|f\|^2_{H^\infty} \sum_{k=0}^\infty   (k+1)^{2}|\widehat{g}(k+1)|^2 \left(\int_{0}^1 t^k   dt\right)^2=  \|f\|^2_{H^\infty}\| g-g(0)\|_{H^2}.
\end{split}\end{equation*}
Therefore, $\hg: H^\infty \to H^2$ is bounded and $\| \hg\|_{H^\infty\to H^2}\le \| g-g(0)\|_{H^2} $. This, together with the fact that the analytic polynomials are dense in $H^2$ and any generalized Hilbert operator induced by a polynomial is finite rank, shows $\H_g$ is also compact. This finishes the proof.
\end{Prf}
\vskip2mm
Now we will prove {Corollary~\ref{th:counterexample}. With this aim
we recall that for  $\lambda>1$,  $g\in \H(\D)$ is called a lacunary series of step $\lambda$ if $g(z)=\sum_{k=0}^\infty a_k z^{n_k}$ and  $n_{k+1}\geq \lambda n_k$ for all $k$.

Prior to prove Theorem~\ref{th:counterexample} let us observe that for
$2<p<\infty$ there is a lacunary series $g\in \H(\D)$ such that  $g\in \Lambda (p,1/p)$ and
 $g'\notin A^{2,\tilde{p}}_{\frac{\tilde{p}}{2}}$, where  and  $\frac{1}{\tp}=\frac{1}{2}-\frac{1}{p}$. In fact,
take $$g(z)=\sum_{n=0}^\infty 2^{-\frac{n}{p}} z^{2^n}.$$ By Theorem~\ref{th:mlip}, $g\in \Lambda (p,1/p)$.
Moreover,
$ \sum_{n=0}^\infty 2^{-n \left( \frac{\tp}{2}+1 \right)} 2^{n\tp \left(1-\frac{1}{p}\right)}= \sum_{n=0}^\infty 1=\infty.$
Therefore,   $g'\notin A^{2,\tilde{p}}_{\frac{\tilde{p}}{2}}$ by \eqref{eq:symbolnorm}.

\vskip2mm
\begin{Prf}{\em{Corollary~\ref{th:counterexample}.}}
Take  $g\in \H(\D)$ a lacunary series (of step $\lambda>1$)  such that $g\in \Lambda (p,1/p)$ and
 $g'\notin A^{2,\tilde{p}}_{\frac{\tilde{p}}{2}}$.  Then, by  Theorem~\ref{th:hardypmayor2}, $\hg$ is not bounded from $H^p$ to $H^2$. Going further,
by \eqref{Hgcoef} for any $f\in H^1$, the analytic function $\hg(f)$ is also a lacunary series with the same step $\lambda$. So, by \cite[Vol I, v, Theorem 8.20]{Zy} there is
$A_{\lambda}>0$   such that
$$ A_\lambda \|\h_g(f)\|_{H^2}\le \|\h_g(f)\|_{H^1}, \quad f\in H^1.$$
This inequality together with the fact that $\hg$ is not bounded from $H^p$ to $H^2$, implies that $\hg$ is not bounded from $H^p$ to $H^1$.
This finishes the proof.
\end{Prf}

\vskip2mm

Finally we will prove Theorem~\ref{th:lacunaryp-p}.

\begin{Prf}{\em{Theorem~\ref{th:lacunaryp-p}.}}
Assume that $\hg$ is bounded on $H^p$. Then it is bounded from $H^p$ to $H^2$, so  by  Theorem~\ref{th:hardypmayor2},  $g'\in A^{2,\tilde{p}}_{\frac{\tilde{p}}{2}}$ and
$$ \|\h_g\|_{H^p\to H^p}\ge \|\h_g\|_{H^p\to H^2}\gtrsim \| g'\|_{ A^{2,\tilde{p}}_{\frac{\tilde{p}}{2}}}.$$
On the other hand, assume that   $g$ is a lacunary series (with $\lambda>1$) such
that  $g'\in A^{2,\tilde{p}}_{\frac{\tilde{p}}{2}}$.
Then for any $f\in H^p$, the analytic function $\hg(f)$ is also a lacunary series with the same step $\lambda$.
So, by \cite[Vol I, v, Theorem 8.20]{Zy} and Theorem~\ref{th:hardypmayor2} there is
$A_{\lambda,p}>0$   such that
$$ \|\h_g(f)\|_{H^p}\le  A_{\lambda,p} \|\h_g(f)\|_{H^2}\lesssim A_{\lambda,p} \| g'\|_{ A^{2,\tilde{p}}_{\frac{\tilde{p}}{2}}} \|f\|_{H^p},  \, \quad f\in H^p.$$
That is, $\hg$ is bounded on $H^p$ and
$ \|\h_g\|_{H^p\to H^p}\lesssim  \ \| g'\|_{ A^{2,\tilde{p}}_{\frac{\tilde{p}}{2}}}.$
This finishes the proof.
\end{Prf}

It is worth mentioning that the sufficient condtion $g'\in A^{2,\tilde{p}}_{\frac{\tilde{p}}{2}}$ for the boundedness of $\hg$ on $H^p$, $p>2$, coincides with the conditon
$g'\in A^{p,\tilde{p}}_{\frac{\tilde{p}}{2}}$ if $g$ is a lacunary series. Finally, let us observe that \cite[Theorem~4.7] {GuoTang2026} states that for any $g\in\H(\D)$ the condition $g'\in A^{p,\tilde{p}}_{\frac{\tilde{p}}{2}}$ is sufficient,  but not necessary, for $\hg$ to be bounded on $H^p$, $p>2$.



\begin{thebibliography}{99}
 %
\bibitem{BSS} P.~S.~ Bourdon, J.~H.~Shapiro and W.~T.~Sledd,
 Fourier series, mean Lipschitz spaces and bounded mean oscillation,
 Analysis at Urbana, 1 (1986), LMS Lecture Note Series n. 137, 81--110, (1989).
 %

\bibitem{Blasco2022} O.~Blasco, Remarks on generalized Hilbert operators,
J. Math. Sci. (N.Y.) 266 (2022), no. 2, 274–284.



%
%

%
\bibitem{D} P.~L. Duren, Theory of $H\sp{p} $ Spaces, Academic Press,
New~York-London 1970. Reprint: Dover, Mineola, New York 2000.
%
%
\bibitem{Flett} T.~M.~Flett, {\em{The dual of an inequality of Hardy and Littlewood and some
related inequalities}}, J. Math. Anal. Appl.  \textbf{38} (1972),
756--765.


\bibitem{GaGiPeSis}     P. Galanopoulos,  D. Girela, J. A. Pel\'aez and A. Siskakis, Generalized Hilbert operators,
		Ann. Acad. Sci. Fenn. Math.  39 (2014), 231--258.

%

\bibitem{GuoTang2026} Y.~Guo and P.~Tang, Generalized Hilbert operators on Hardy spaces,
https://arxiv.org/abs/2607.28221

%
 %

 \bibitem{HL-32} G. H. Hardy and J. E. Littlewood,
{\em{Some properties of fractional integrals, II}}, Math. Z.
\textbf{34} (1932), 403--439.


%
%

%

%
\bibitem{MP} M.~Mateljevic and M.~Pavlovic, {\em{$L^p$ behaviour of the integral means
of analytic functions}}, Studia Math. \textbf{77} (1984), 219--237.
%

%
 \bibitem{Pabook} M.~Pavlovi\'c, \emph{Introduction to function spaces on the
disk, Posebna Izdanja [Special Editions]\/}, vol. $20$, Matemati\v
cki Institut SANU, Beograd, 2004.
%
 \bibitem{Padec} M.~Pavlovi\'c, {\em{Analytic functions with decreasing coefficients and Hardy spaces}}, preprint, (2011).
%

		\bibitem{PelRathg} J.~\'A. Pel\'aez and J. R\"atty\"a, Generalized Hilbert operators on weighted Bergman spaces, Adv. Math.  240 (2013), 227--267.






\bibitem{PelSeco} J.~\'A. Pel\'aez and D.~Seco, Schatten classes of generalized Hilbert operators,
Collect. Math. 69 (2018), no. 1, 83–105.






\bibitem{Zhu} K.~Zhu, \emph{Operator Theory in Function Spaces\/},
2nd edition. Math Surveys and Monographs, Vol. 138,
  American Mathematical Society, Providence,
RI, 2007.
%
\bibitem{Zy} A.~Zygmund, \emph{Trigonometric Series\/},
Vol. I and Vol. II, Second edition, Camb. Univ. Press, Cambridge,
1959.
\end{thebibliography}
\end{document}